\theoremstyle{plain}
\newtheorem{theorem}{Theorem}[section]
\newtheorem{corollary}[theorem]{Corollary}
\newtheorem{proposition}[theorem]{Proposition}
\newtheorem{lemma}[theorem]{Lemma}
\theoremstyle{definition}
\newtheorem{definition}[theorem]{Definition}
\theoremstyle{remark}
\newtheorem{remark}[theorem]{Remark}
\begin{document}
\author{Sooran Kang}
\title[The Yang-Mills functional on quantum Heisenberg manifolds]
{The Yang-Mills functional and Laplace's equation on quantum Heisenberg manifolds}

\begin{abstract}
In this paper, we discuss the Yang-Mills functional and a certain family of its critical points on quantum Heisenberg manifolds using noncommutative geometrical methods developed by A. Connes and M. Rieffel.
In our main result, we construct a certain family of connections on a projective module over a quantum Heisenberg manifold that give rise to critical points of the Yang-Mills functional. Moreover, we show that this set of solutions can be described as a set of solutions to Laplace's equation on quantum Heisenberg manifolds.
\end{abstract}

\subjclass[2000]{Primary 46L87 ; Secondary 58B34}

\maketitle

Since Alain Connes initiated noncommutative differential geometry in his ground-breaking paper \cite{C1}, the theory has flourished in different areas and, has motivated new ideas in various fields. Connes and Marc Rieffel's Yang-Mills theory for the noncommutative torus \cite{CR} is one of these examples, using the framework of noncommutative geometry to extend Yang-Mills theory to finitely generated projective modules over non-commutative $C^{\ast}$-algebras. This generalization seems to be natural, but Connes' and Rieffel's Yang-Mills theory for the noncommutative torus seems to be the only specific example of such an application so far.
In this paper, using the same framework developed in \cite{CR}, we attempt to develop Yang-Mills theory on quantum Heisenberg manifolds, $\{D^{c,\hslash}_{\mu\nu}\}_{\hslash\in\mathbb{R}}$, which are a different type of noncommutative $C^{\ast}$-algebra first constructed by Marc Rieffel \cite{Rief1}. We are able to discover a certain family of critical points of the Yang-Mills functional on the quantum Heisenberg manifolds. Inspired by the works in \cite{KoSch} and \cite{Ros}, in particular by the work of J. Rosenberg, we also find that this set of critical points can be described as a set of solutions to Laplace's equation on quantum Heisenberg manifolds.

The main difference in our case from that of the theory of the noncommutative torus is the following. First of all, for fixed $\mu,\nu$ and $c$, the projective module over $D^{c,\hslash}_{\mu\nu}$ is constructed by realizing $D^{c,\hslash}_{\mu\nu}$ as a  generalized fixed point algebra of a certain crossed product $C^{\ast}$-algebra, and thereby developing a bimodule structure.
Also we use a particular Grassmannian connection to produce a compatible connection on the projective module. The method of finding such a non-trivial connection is related to the technique of finding Rieffel projections in noncommutative tori, a method not employed by Connes and Rieffel in \cite{CR} and \cite{Rief2}. In our case, the last step of finding actual solutions of the Yang-Mills equation is related to solving an elliptic partial differential equation, which is very different from the approach of \cite{CR} and \cite{Rief2}.

This paper is organized as follows. In Section 1, we begin with the definition of quantum Heisenberg manifolds, and we give a specific formula for a particular $E^{c,\hslash}_{\mu\nu}$-$D^{c,\hslash}_{\mu\nu}$ projective bimodule $\Xi$ described in \cite{Ab1} and \cite{Ab2}. In Section 2, we show that $E^{c,\hslash}_{\mu\nu}$ is isomorphic in a fashion preserving the bimodule structure to $D^{c,\hslash}_{\frac{1}{4\mu},\frac{\nu}{2\mu}}$, using the technique of crossed products by Hilbert $C^{\ast}$-bimodules described in \cite{AEE}, \cite{Ab4}. In Section 3, we describe the noncommutative geometrical framework for Yang-Mills theory, and we produce a special function $R$ that gives a non-trivial Grassmannian connection and curvature. In Section 4 and 5, we introduce the notion of ``multiplication-type'' element, and we describe a certain set of critical points of the Yang-Mills functional on quantum Heisenberg manifolds. In the last section, we show that the set of critical points that we found can be described as a set of solutions to Laplace's equation on quantum Heisenberg manifolds.\\

\textbf{Acknowledgement.} I would like to take this opportunity to thank my thesis advisor, Judith Packer, for her constant patience and encouragement, as well as a number of helpful suggestions and comments.

\section{Projective modules over quantum Heisenberg manifolds}

Let $G$ be the Heisenberg group, parametrized by
\[(x,y,z)= \begin{pmatrix}1&y&z\\0&1&x\\0&0&1\end{pmatrix}\]
so that when we identify $G$ with $\mathbb{R}^3$ the product is given by
\begin{equation*}
(x,y,z)(x',y',z')=(x+x',y+y',z+z'+yx').
\end{equation*}

For any positive integer $c$, let $D_c$ denote the subgroup of $G$ consisting of those $(x,y,z)$ such that $x$, $y$, and $cz$ are integers. Then the Heisenberg manifold, $M_c$, is the quotient $G/D_c$, on which $G$ acts on the left.

 In \cite{Rief1}, Rieffel constructed strict deformation quantizations $\{D^{c,\hslash}_{\mu\nu}\}_{\hslash \in \mathbb{R}}$ of $M_c$ in the direction of the Poisson bracket $\Lambda_{\mu\nu}$, determined by two real parameters $\mu$ and $\nu$, where ${\mu}^2+{\nu}^2\ne 0$. He recognized that these non-commutative $C^{\ast}$-algebras could be described as generalized
fixed-point algebras of certain crossed product $C^{\ast}$-algebras under proper actions. Then Abadie showed in \cite{Ab1} that it is possible to construct a (finitely generated) projective bimodule over two generalized fixed-point algebras, under appropriate conditions. As an example in \cite{Ab1} and \cite{Ab2}, she stated explicit formulas for a finitely generated projective module over two generalized fixed point algebras, one of which is $D^{c,\hslash}_{\mu\nu}$, the quantum Heisenberg manifold. We give here more details of the specific construction, which will be used for further discussion in later sections.

First, we introduce the reparametrization of Heisenberg group described in \cite{Rief1}. For a given positive integer $c$, we reparametrize the Heisenberg group $G$ as
\begin{equation}\label{rep-H}
(x,y,z)= \begin{pmatrix}1&y&z/c\\0&1&x\\0&0&1\end{pmatrix}.
\end{equation}
Then the product on $\mathbb{R}^3$ becomes
\[(x,y,z)(x',y',z')=(x+x',y+y',z+z'+cyx'),\]
and $D_c$ becomes the subgroup with integer entries. Let $E_c=\{(0,m,n)\in D_c\}$ be the normal subgroup of $D_c$. Then we can check that for $f \in C^{\infty}(G)$, the operator corresponding to right translation of $f$ by $(k,m,n)\in D_c$ is given by
\[f(x,y,z)\longrightarrow f(x+k,y+m,z+n+cky).\]
To obtain the Heisenberg manifold, we consider the quotient, $N_c$, of $G$ by the right action of $E_c$. Then this quotient looks like $\mathbb{R}\times \mathbb{T}^2$. If we define an action $\rho$ of ${\mathbb Z}$ on $N_c$ by
\begin{equation}\label{proper1}
(\rho_k f)(x,y,z)=f(x+k,y,z+cky),
\end{equation}
for $(x,y,z)\in \mathbb{R}\times \mathbb{T}^2$ and a smooth function $f$ on $N_c$, then the Heisenberg manifold $M_c$ is the quotient of $N_c$ by $\rho$. Also the action $\rho$ of ${\mathbb Z}$ can be viewed as $(k,0,0)\in D_c$ acting on the right on $N_c$, which means the following.
\[f((x,y,z)\cdot(k,0,0))=f(x+k,y,z+cky)=(\rho_k f)(x,y,z).\]
Thus we can consider functions on $M_c$ as functions on $N_c$ which are invariant under the action $\rho$. Now we describe the action of $G$ on the left on $N_c$. For $g=(r,s,t)\in G$ and $(x,y,z)\in N_c = \mathbb{R}\times \mathbb{T}^2$, define the left action of $G$ on $C^{\infty}(N_c)$ by
\begin{equation}\label{H-action1}
(g\cdot f)(x,y,z)=f((r,s,t)^{-1}\cdot(x,y,z))=f(x-r,y-s,z-t-sc(x-r)),
\end{equation}
where $f \in C^{\infty}(N_c)$ and $(r,s,t)^{-1}$ is the inverse of $(r,s,t)$ in $G$.
Then a straightforward calculation shows that this action of $G$ on the left on $N_c$ commutes with the action $\rho$.

To obtain a strict deformation quantization of the Heisenberg manifold, Rieffel first formed a deformation quantization on $N_c\cong \mathbb{R}\times \mathbb{T}$ in \cite{Rief1}, denoted by $A_\hbar$, by using the Fourier transform, and showed that the action $\rho$ on this quantization is proper. Then he recognized that $A_\hbar$ can be identified with a certain crossed product $C^{\ast}$-algebra under the map $J$ given in \cite{Rief1},
p. 547, and a strict deformation quantization of $C^{\infty}(M_c)$, denoted by $D_\hslash$, via the above isomorphism, it is possible to view as the generalized fixed-point algebra of this crossed product $C^{\ast}$-algebra under the action $\rho$. See more about proper actions and generalized fixed point algebras in \cite{Rief4}. The corresponding action $\rho$ and the action of the Heisenberg group on $A_\hslash$ are given as follows. Take the Fourier transform in the third variable in equations (\ref{proper1}) and (\ref{H-action1}); then we have, for $\phi \in S(\mathbb{R}\times \mathbb{T}\times \mathbb{Z}$),
\begin{equation}\label{proper2} (\rho_{k}\phi)(x,y,p)=\overline{e}(ckpy)\phi(x+k,y,p),\end{equation}
 and the formula for the action of the Heisenberg group on the same deformed algebra is given by
\begin{equation}\label{H-action2}L_{(r,s,t)}\phi(x,y,p)=e(p(t+cs(x-r))\phi(x-r,y-s,p),\end{equation}
where $S(\mathbb{R}\times \mathbb{T}\times \mathbb{Z})$ is Schwartz space, the set of functions on $\mathbb{R}\times \mathbb{T}\times \mathbb{Z}$ which go to zero at infinity faster than any polynomial grows.

  Since these two actions commute on $S(\mathbb{R}\times \mathbb{T}\times \mathbb{Z})$, a dense subalgebra of $A_{\hslash}$, the same formula $L$ gives the action of the Heisenberg group on the generalized fixed point algebra for $\rho$, $D_\hbar$.

 Now we state the specific formula for a particular projective module over the quantum Heisenberg manifolds, $D^{c,\hslash}_{\mu\nu}$ shown in
\cite{Ab1} and \cite{Ab2} as follows.

 Let $M=\mathbb{R}\times \mathbb{T}$ and $\lambda$ and $\sigma$ be the commuting actions of $\mathbb{Z}$ on $M$ defined by
\begin{equation*}
\lambda_p(x,y)=(x+2\hslash p\mu,y+2\hslash p\nu)\quad\text{and}\quad
\sigma_p(x,y)=(x-p,y),
\end{equation*}
where $\hslash$ is Planck's constant, $\mu, \nu \in \mathbb{R}$, and $p\in \mathbb{Z}$.

Then construct the crossed product $C^{\ast}$-algebras $C_b(\mathbb{R}\times \mathbb{T})\times_{\lambda}\mathbb{Z}$ and $C_b(\mathbb{R}\times \mathbb{T})\times_{\sigma}\mathbb{Z}$ with usual star-product and involution. Here $C_b(\mathbb{R}\times\mathbb{T})$ is a set of bounded functions on $\mathbb{R}\times\mathbb{T}$, and  $\rho$ and $\gamma$ denote the actions of $\mathbb{Z}$ on $C_{b}(\mathbb{R}\times
\mathbb{T})\times_{\lambda}\mathbb{Z}$ and $C_{b}(\mathbb{R}\times \mathbb{T})\times_{\sigma}\mathbb{Z}$ given by, for $\Phi,\Psi \in C_c(\mathbb{R}\times \mathbb{T}\times \mathbb{Z})$,
\begin{equation*}
(\rho_k\Phi)(x,y,p)=\overline{e}(ckp(y-\hslash p\nu))\Phi(x+k,y,p),
\end{equation*}
\begin{equation*}
(\gamma_k\Psi)(x,y,p)=e(cpk(y-\hslash k\nu))\Psi(x-2\hslash k\mu,y-2\hslash k\nu),
\end{equation*}
 where $k, p \in \mathbb{Z}$, and $e(x)=exp(2\pi ix)$ for any real number $x$.
Then these actions $\rho$, $\gamma$ are proper.
 The generalized fixed point algebra of $C_{b}(\mathbb{R}\times\mathbb{T})\times_{\lambda}\mathbb{Z}$ by the action $\rho$, denoted by $D^{c,\hslash}_{\mu\nu}$, is the closure of $\ast$-subalgebra $D_0$ in the multiplier algebra of $C_{b}(\mathbb{R}\times
\mathbb{T})\times_{\lambda}\mathbb{Z}$  consisting of functions $\Phi \in C_c(\mathbb{R}\times \mathbb{T}\times \mathbb{Z})$, which have compact support on $\mathbb{Z}$ and satisfy $\rho_k(\Phi)=\Phi$ for all $k \in \mathbb{Z}$.

\begin{remark}
The above formula of $\rho$ on $C_{b}(\mathbb{R}\times\mathbb{T})\times_{\lambda}\mathbb{Z}$ can be obtained from the equation (\ref{proper2}) under the map $J$ given in \cite{Rief1}, p. 547, and we consider $D^{c,\hslash}_{\mu\nu}$ as the corresponding generalized fixed point algebra, $D_{\hslash}$ under the same map $J$.
\end{remark}
We can obtain the action of the Heisenberg group on $D^{c,\hslash}_{\mu\nu}$ from the equation (\ref{H-action2}) via the map $J$, given by
\begin{equation}\label{H-action3}
(L_{(r,s,t)}\Phi)(x,y,p)=e(p(t+cs(x-r-{\hslash}p\mu)))\Phi(x-r,y-s,p),
\end{equation}
for $\Phi \in D_0$.

 Similarly, the generalized fixed point algebra of $C_{b}(\mathbb{R}\times \mathbb{T})\times_{\sigma}\mathbb{Z}$ by the action $\gamma$, denoted by $E^{c,\hslash}_{\mu\nu}$, is the closure of $\ast$-subalgebra $E_0$ in the multiplier algebra of $C_{b}(\mathbb{R}\times \mathbb{T})\times_{\sigma}\mathbb{Z}$  consisting of functions $\Psi \in C_c(\mathbb{R}\times \mathbb{T}\times \mathbb{Z})$, with compact support on $\mathbb{Z}$ and satisfying $\gamma_k(\Psi)=\Psi$ for all $k \in \mathbb{Z}$.

According to the main theorem in \cite{Ab1}, these generalized fixed point algebras $D^{c,\hslash}_{\mu\nu}$ and $E^{c,\hslash}_{\mu\nu}$ are strongly Morita equivalent. Let $\Xi$ be the left-$E^{c,\hslash}_{\mu\nu}$ and right-$D^{c,\hslash}_{\mu\nu}$ bimodule constructed as follows. $\Xi$ is the completion of $C_c(\mathbb{R}\times \mathbb{T})$ with respect to either one of the norms induced by one of the $D^{c,\hslash}_{\mu\nu}$ and $E^{c,\hslash}_{\mu\nu}$-valued inner products, $\langle\cdot,\cdot\rangle_D$ and $\langle\cdot,\cdot\rangle_E$ respectively, given by
\begin{equation*}\label{D-value-inner}
\langle f,g \rangle_D(x,y,p)=\sum_{k \in \mathbb{Z}}\overline{e}(ckp(y-\hslash p\nu))f(x+k,y)\overline{g}(x-2\hslash p\mu+k,y-2\hslash p\nu)\,,
\end{equation*}
\begin{equation*}\label{E-value-inner}
\langle f,g \rangle_E(x,y,p)=\sum_{k \in \mathbb{Z}}e(cpk(y-\hslash k\nu)\overline{f}(x-2\hslash k\mu,y-2\hslash k\nu)g(x-2\hslash k\mu+p,
y-2\hslash k\nu),
\end{equation*}
 where $f,g \in C_c(\mathbb{R}\times \mathbb{T})$ and $k,\,p \in \mathbb{Z}$.
Also the left and right action of $E^{c,\hslash}_{\mu\nu}$ and $D^{c,\hslash}_{\mu\nu}$ on $\Xi$ are given by
\begin{equation*}\label{left-action}
(\Psi\cdot f)(x,y)=\sum_{q \in\mathbb{Z}}\overline{\Psi}(x,y,q)f(x+q,y),
\end{equation*}
\begin{equation*}\label{right-action}
(g\cdot\Phi)(x,y)=\sum_{q\in\mathbb{Z}}g(x+2\hslash q\mu,y+2\hslash q\nu)\overline{\Phi}(x+2\hslash q\mu,y+2\hslash q\nu,q),
\end{equation*}
for $\Psi \in E_0$ , $\Phi \in D_0$ and $f, g \in \Xi$.

\section{Morita equivalence of quantum Heisenberg manifolds}
 It has been shown that the generalized fixed-point algebra of a certain crossed $C^{\ast}$-algebra constructed by Rieffel in \cite{Rief4}  can be generated by the fixed-point algebra and the first spectral subspace for the action of $\mathbb{T}$ on the crossed product $C^{\ast}$-algebra. Also, it is known that the first spectral subspace has a natural bimodule structure over the fixed-point algebra. Thus, we can classify generalized fixed-point algebras by examining each fixed-point algebra, its first spectral subspace, and the bimodule structure. We follow the same technique that Abadie introduced in the papers \cite{Ab4} and \cite{AEE} in order to prove that $E^{c,\hslash}_{\mu\nu}$ can be identified with a quantum Heisenberg manifold with parameters $\bigl(\frac{1}{4\mu},\frac{\nu}{2\mu}\bigl)$. Note that Abadie did not indicate that the generalized fixed-point algebra $E^{c,\hslash}_{\mu\nu}$ can be identified with  $D^{c,\hslash}_{\frac{1}{4\mu}\frac{\nu}{2\mu}}$  in the papers \cite{Ab4}, \cite{AEE}.

As we described earlier, the quantum Heisenberg manifold $D^{c,\hslash}_{\mu\nu}$ is the generalized fixed-point algebra of $C_b(\mathbb{R}\times\mathbb{T})\times_{\lambda}\mathbb{Z}$ under the action $\rho$. In particular,
\[D^{c,\hslash}_{\mu\nu}=\overline{span}\{\Phi\in C_c(\mathbb{R}\times\mathbb{T}\times\mathbb{Z})\;|\;\overline{e}(ckp(y-\hslash p\nu))\Phi(x+k,y,p)=\Phi(x,y,p)\;\;\text{for all}\;\;k\in\mathbb{Z}\}\]
\[=\overline{span}\{\phi\delta_p\;|\;\overline{e}(ckp(y-\hslash p\nu))\phi(x+k,y)=\phi(x,y)\;\;\text{for all}\;\; k\in\mathbb{Z}\}.\]
Now choosing $k=1$, then we can write $D^{c,\hslash}_{\mu\nu}$ by
\[D^{c,\hslash}_{\mu\nu}=\overline{span}\{\phi\delta_p\;|\;\overline{e}(cp(y-\hslash p\nu))\phi(x+1,y)=\phi(x,y)\},\]
for $\phi\in C_b(\mathbb{R}\times\mathbb{T})$.
Also notice that $D^{c,\hslash}_{\mu\nu}$ carries a natural dual action of $\mathbb{T}$.
Let $\varsigma$ be the action of $\mathbb{T}$ on $D^{c,\hslash}_{\mu\nu}$ given by
\[(\varsigma_z\Phi)(x,y,p)=z^p\Phi(x,y,p)=e(pz)\Phi(x,y,p)\;\;\text{for}\;\; z\in\mathbb{T}.\]
 Thus, the $n$th spectral subspace of $D^{c,\hslash}_{\mu\nu}$ is given as follows.
 \begin{equation}\label{nth}
(D^{c,\hslash}_{\mu\nu})_n=\{f\delta_n\;|\;f\in C_b(\mathbb{R}\times\mathbb{T}),\;\overline{e}(cn(y-\hslash n\nu)) f(x+1,y)=f(x,y)\}.
\end{equation}
Let $(D^{c,\hslash}_{\mu\nu})_0$ be the fixed point algebra of $D^{c,\hslash}_{\mu\nu}$ under the action $\varsigma$, and let $(D^{c,\hslash}_{\mu\nu})_1$ be the first spectral subspace of $D^{c,\hslash}_{\mu\nu}$ for $\varsigma$. i.e.
\[(D^{c,\hslash}_{\mu\nu})_0=\{\phi\delta_0\in D^{c,\hslash}_{\mu\nu}\;|\; \phi(x,y)=\phi(x+1,y)\},\]
\[(D^{c,\hslash}_{\mu\nu})_1=\{g\delta_1\in D^{c,\hslash}_{\mu\nu}\;|\; g(x,y)=\overline{e}(c(y-\hslash\nu))g(x+1,y)\}.\]
Similarly,  $E^{c,\hslash}_{\mu\nu}$, mentioned in the previous section, is the generalized fixed-point algebra of $C_b(\mathbb{R}\times\mathbb{T})\times_{\sigma}\mathbb{Z}$ under $\gamma$. Thus, it carries a natural action $\varrho$ of $\mathbb{T}$ given by
 \[(\varrho_z\Psi)(x,y,p)=z^p\Psi(x,y,p)\;\;\text{for}\;\; z\in\mathbb{T}.\]
 Let $(E^{c,\hslash}_{\mu\nu})_0$ be the fixed point algebra of $E^{c,\hslash}_{\mu\nu}$ under the action $\varrho$, and let $(E^{c,\hslash}_{\mu\nu})_1$ be the first spectral subspace of $E^{c,\hslash}_{\mu\nu}$ for $\varrho$. i.e.
\[(E^{c,\hslash}_{\mu\nu})_0=\{\psi\delta_0\in E^{c,\hslash}_{\mu\nu}\;|\;\psi(x,y)=\psi(x-2\hslash\mu, y-2\hslash\nu)\},\]
\[(E^{c,\hslash}_{\mu\nu})_1=\{f\delta_1\in E^{c,\hslash}_{\mu\nu}\;|\;f(x,y)=e(c(y-\hslash\nu))f(x-2\hslash\mu,y-2\hslash\nu)\}.\]
According to Proposition 1.2 in \cite{Ab4} by Abadie, the $C^{\ast}$-algebras $C_b(M/\alpha)\rtimes X^{\alpha,u}_\beta$ and  $C_b(M/\beta)\rtimes X^{\beta,u^{\ast}}_\alpha$ are Morita equivalent, where $\alpha$, $\beta$ are free and proper commuting actions of $\mathbb{Z}$ on a locally compact Hausdorff space $M$, $u$ is a unitary in $C_b(M)$, and  $X^{\alpha,u}_\beta$ and $X^{\beta,u^{\ast}}_\alpha$ are $C^{\ast}$-bimodules. Thus, there is a left-$C_b(M/\alpha)\rtimes X^{\alpha,u}_\beta$ and right-$C_b(M/\beta)\rtimes X^{\beta,u^{\ast}}_\alpha$ bimodule, and we denote it by ${\Xi}'$.
By choosing appropriate actions $\alpha$ and $\beta$, Abadie showed that $D^{c,\hslash}_{\mu\nu}$ is strongly Morita equivalent to $D^{c,\hslash}_{\frac{1}{4\mu},\frac{\nu}{2\mu}}$ in \cite{Ab4}. We give the specific formulas that we use later in this section as follows.

Consider the actions $\alpha$ and $\beta$ on $M=\mathbb{R}\times\mathbb{T}$ given by
$\alpha(x,y)=(x+\frac{1}{2\hslash\mu},y)$ and $\beta(x,y)=(x+1,y+2\hslash\nu)$.
Let $u(x,y)=\overline{e}(c(y-\hslash\nu))$, so $u^{\ast}(x,y)=e(c(y-\hslash\nu))$. Then $C_b(M/\alpha)\cong C(\mathbb{T}^2)$ and $C_b(M/\beta)\cong C(\mathbb{T}^2)$. Note that the unitary $u$ here is different from the unitary given in Proposition 2.2 in \cite{Ab4}.
With the unitary $u$ and the commuting actions $\alpha$ and $\beta$ as above, we can write the corresponding $C^{\ast}$-bimodules $X^{\alpha,u}_\beta$ and $X^{\beta,u^{\ast}}_\alpha$ as follows.
\[X^{\alpha,u}_\beta=\{f\in C_b(\mathbb{R}\times\mathbb{T})|f(x-\frac{1}{2\hslash\mu},y)=\overline{e}(c(y-\hslash\nu))f(x,y)\}.\]
Then $X^{\alpha,u}_\beta$ is a bimodule over $C_b(\mathbb{R}\times\mathbb{T}/\alpha)\cong C(\mathbb{T}^2)$ with following formulas.
For $\psi \in C_b(\mathbb{R}\times\mathbb{T}/\alpha)$ and  $f,g\in X^{\alpha,u}_\beta$,
\[(\psi\cdot f)(x,y)=\psi(x,y)f(x,y),\;\;(f\cdot\psi)(x,y)=f(x,y)\psi(x-1,y-2\hslash\nu),\]
\[\langle f,g\rangle_L(x,y)=f(x,y)\overline{g}(x,y),\;\;\langle f,g\rangle_R(x,y)=\overline{f}(x+1,y+2\hslash\nu)g(x+1,y+2\hslash\nu).\]
Here the subscripts $R$ and $L$ stand for the right and left inner products, respectively.
Also we can write $X^{\beta,u^{\ast}}_\alpha$ in the following way.
\[X^{\beta,u^{\ast}}_\alpha=\{g\in C_b(\mathbb{R}\times\mathbb{T}/\beta)|g(x-1,y-2\hslash\nu)=e(c(y-\hslash\nu))g(x,y)\}.\]
Then $X^{\beta,u^{\ast}}_\alpha$ is a bimodule over $C_b(\mathbb{R}\times\mathbb{T}/\beta)\cong C(\mathbb{T}^2)$ with  the following formulas.
For $\phi \in C_b(\mathbb{R}\times\mathbb{T}/\beta)$ and $f,g\in X^{\beta,u^{\ast}}_\alpha$,
\[(\phi\cdot f)(x,y)=\phi(x,y)f(x,y),\;\;(f\cdot\phi)(x,y)=f(x,y)\phi(x-\frac{1}{2\hslash\mu},y),\]
\[\langle f,g\rangle_L(x,y)=f(x,y)\overline{g}(x,y),\;\;\langle f,g\rangle_R(x,y)=\overline{f}(x+\frac{1}{2\hslash\mu},y)g(x,y).\]
For the $D^{c,\hslash}_{\mu\nu}$-$E^{c,\hslash}_{\mu\nu}$ bimodule $\Xi$ given in the previous section, and the $C_b(M/\alpha)\rtimes X^{\alpha,u}_\beta$-$C_b(M/\beta)\rtimes X^{\beta,u^{\ast}}_\alpha$ bimodule ${\Xi}'$ above,
it is not hard to verify that $D^{c,\hslash}_{\mu\nu}$ is strongly Morita equivalent to $D^{c,\hslash}_{\frac{1}{4\mu},\frac{\nu}{2\mu}}$ by showing that $D^{c,\hslash}_{\mu\nu}$ is isomorphic to $C_b(\mathbb{R}\times\mathbb{T}/\beta)\rtimes X^{\alpha,u}_{\beta}$ and $D^{c,\hslash}_{\frac{1}{4\mu},\frac{\nu}{2\mu}}$ is isomorphic to $C_b(\mathbb{R}\times\mathbb{T}/\beta)\rtimes X^{\beta, u^{\ast}}_{\alpha}$ in a fashion preserving the bimodule structures as shown in \cite{Ab4}. Thus we only need the following lemma to show that $E^{c,\hslash}_{\mu\nu}$ is isomorphic to $D^{c,\hslash}_{\frac{1}{4\mu},\frac{\nu}{2\mu}}$.
\begin{lemma}\label{MoritaE}
$E^{c,\hslash}_{\mu\nu}$ is isomorphic in a fashion preserving the bimodule structure to $C_b(\mathbb{R}\times\mathbb{T}/\beta)\rtimes X^{\beta, u^{\ast}}_{\alpha}$.
\end{lemma}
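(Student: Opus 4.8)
The plan is to use the framework recalled above from \cite{Ab4} and \cite{AEE}, in which a generalized fixed-point algebra of the present kind is reconstructed from its fixed-point algebra together with its first spectral subspace, viewed as a Hilbert $C^{\ast}$-bimodule over the fixed-point algebra, and in which, conversely, $C_b(\mathbb{R}\times\mathbb{T}/\beta)\rtimes X^{\beta,u^{\ast}}_\alpha$ is by definition the crossed product of $C_b(\mathbb{R}\times\mathbb{T}/\beta)$ by the Hilbert bimodule $X^{\beta,u^{\ast}}_\alpha$. Thus I would reduce the lemma to three tasks: (i) exhibit a $C^{\ast}$-algebra isomorphism $(E^{c,\hslash}_{\mu\nu})_0\cong C_b(\mathbb{R}\times\mathbb{T}/\beta)$; (ii) exhibit an isomorphism of Hilbert $C^{\ast}$-bimodules $(E^{c,\hslash}_{\mu\nu})_1\cong X^{\beta,u^{\ast}}_\alpha$ compatible with (i) on both the left and right coefficient actions and on both $C_b(\mathbb{R}\times\mathbb{T}/\beta)$-valued inner products; and (iii) assemble (i) and (ii) into a graded $\ast$-isomorphism of the two algebras.

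For (i) I would use the reparametrization $R\colon(x,y)\mapsto(2\hslash\mu x,y)$ of $\mathbb{R}\times\mathbb{T}$, which is legitimate since $\mu\neq 0$ (and when $\hslash=0$ both algebras are simply $C(M_c)$, so nothing is to be proved). Pullback along $R$ carries a function invariant under $(x,y)\mapsto(x-2\hslash\mu,y-2\hslash\nu)$, which is the defining condition for $(E^{c,\hslash}_{\mu\nu})_0$, to one invariant under $\beta\colon(x,y)\mapsto(x+1,y+2\hslash\nu)$, and is plainly a $\ast$-isomorphism onto $C_b(\mathbb{R}\times\mathbb{T}/\beta)$. For (ii) I would use the same pullback on the first spectral subspace and check that it carries the twisted law $f(x,y)=e(c(y-\hslash\nu))f(x-2\hslash\mu,y-2\hslash\nu)$ defining $(E^{c,\hslash}_{\mu\nu})_1$ to the law defining $X^{\beta,u^{\ast}}_\alpha$. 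The delicate point is the twisting phase: a bare rescaling produces the complex-conjugate phase $\overline{e}(c(y-\hslash\nu))$, i.e.\ it lands in $\overline{X^{\beta,u^{\ast}}_\alpha}=X^{\beta,u}_\alpha$, which is canonically the degree $-1$ part of $C_b(\mathbb{R}\times\mathbb{T}/\beta)\rtimes X^{\beta,u^{\ast}}_\alpha$; so the identification must absorb the grading-reversal $z\mapsto z^{-1}$ (equivalently a reflection in a coordinate, or a conjugation). Once the transformation laws are matched, it remains to verify by direct substitution that under $R$ the left and right $C_b(\mathbb{R}\times\mathbb{T}/\beta)$-actions and the inner products $\langle\cdot,\cdot\rangle_L$, $\langle\cdot,\cdot\rangle_R$ on $X^{\beta,u^{\ast}}_\alpha$ correspond to the bimodule actions of $(E^{c,\hslash}_{\mu\nu})_0$ on $(E^{c,\hslash}_{\mu\nu})_1$ and to the products $f\delta_1\,(g\delta_1)^{\ast}$ and $(f\delta_1)^{\ast}\,g\delta_1$ computed inside $E^{c,\hslash}_{\mu\nu}$.

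For (iii), $E^{c,\hslash}_{\mu\nu}$ is generated as a $C^{\ast}$-algebra by $(E^{c,\hslash}_{\mu\nu})_0$ and $(E^{c,\hslash}_{\mu\nu})_1$, and $C_b(\mathbb{R}\times\mathbb{T}/\beta)\rtimes X^{\beta,u^{\ast}}_\alpha$ is generated by $C_b(\mathbb{R}\times\mathbb{T}/\beta)$ and $X^{\beta,u^{\ast}}_\alpha$; since the degree-$0$ and degree-$1$ maps above are both restrictions of the single pullback $R^{\ast}$, they are automatically compatible with the (tensor) products into higher degrees, and so extend uniquely to a $\ast$-isomorphism (alternatively, one invokes directly the uniqueness in Abadie's description of these algebras as crossed products by Hilbert bimodules). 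Finally, for the clause ``in a fashion preserving the bimodule structure'' I would combine this with the isomorphism $D^{c,\hslash}_{\mu\nu}\cong C_b(\mathbb{R}\times\mathbb{T}/\alpha)\rtimes X^{\alpha,u}_\beta$ recorded above from \cite{Ab4} and check that, under the resulting pair of isomorphisms, the $E^{c,\hslash}_{\mu\nu}$-$D^{c,\hslash}_{\mu\nu}$-bimodule $\Xi$ of Section~1 goes over to the bimodule $\Xi'$; this again reduces to comparing, after the change of variables $R$, the formulas for $\langle\cdot,\cdot\rangle_D$, $\langle\cdot,\cdot\rangle_E$ and for the left and right actions on $\Xi$ with the corresponding data for $\Xi'$.

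The step I expect to be the main obstacle is (ii): forcing the twisting cocycle $e(c(y-\hslash\nu))$ in the two descriptions of the first spectral subspace to agree exactly — a naive reparametrization yields the conjugate cocycle, so the identification must incorporate the grading reversal — and then confirming that a single map simultaneously intertwines both one-sided coefficient actions and both bimodule-valued inner products (and, for the last clause, the $\Xi$-$\Xi'$ pairing). Everything else is either bookkeeping with the change of variables $R$ or a formal appeal to the generalized-fixed-point / Hilbert-bimodule machinery already in place.
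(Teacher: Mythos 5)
Your overall strategy is exactly the paper's: identify $(E^{c,\hslash}_{\mu\nu})_0$ with $C_b(\mathbb{R}\times\mathbb{T}/\beta)$ and $(E^{c,\hslash}_{\mu\nu})_1$ with $X^{\beta,u^{\ast}}_\alpha$ compatibly with the coefficient actions and both inner products, then invoke the crossed-product-by-Hilbert-bimodule machinery of \cite{Ab4}, \cite{AEE} to promote this pair of maps to an isomorphism of the full algebras. You also correctly diagnose the one nontrivial issue, namely that a bare rescaling of $x$ matches the translation lattices but produces the conjugate cocycle.

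However, the proposal stops precisely at that point: you note that ``the identification must absorb the grading-reversal (equivalently a reflection in a coordinate, or a conjugation)'' and defer the construction, but neither a reflection nor a conjugation alone suffices, and this is where the actual content of the proof lies. The paper's intertwiner (with $\hslash$ absorbed into $\mu,\nu$) is
\[
S(f)(x,y)=e\Bigl(\frac{cy^2}{2\nu}\Bigr)\,f\Bigl(-\frac{x}{2\mu},-y\Bigr),\qquad H(\phi)(x,y)=\phi\Bigl(-\frac{x}{2\mu},-y\Bigr);
\]
the reflection is in \emph{both} variables, and the essential extra ingredient is the quadratic phase $e(cy^{2}/(2\nu))$. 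Indeed, the reflection $y\mapsto -y$ turns the cocycle $e(c(y-\nu))$ into $e(c(-y-\nu))=\overline{e}(c(y+\nu))$, which has the right orientation (conjugate) but the wrong base point ($y+\nu$ instead of $y-\nu$); this discrepancy is a coboundary, and $e(cy^{2}/(2\nu))$ is exactly the cobounding phase along the step $(x,y)\mapsto(x-2\mu,y-2\nu)$, since $\frac{c(y-2\nu)^{2}}{2\nu}+2c(y-\nu)=\frac{cy^{2}}{2\nu}$. Without exhibiting this (or an equivalent) correction and checking that the \emph{same} map $S$ simultaneously intertwines the left and right module actions and both inner products, the lemma is not proved: your items (ii) and (iii), and the final compatibility of $\Xi$ with $\Xi'$, are all stated as verifications to be performed rather than carried out. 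So the route is the right one, but the step you yourself flag as the main obstacle is left open, and closing it requires the specific quadratic-phase correction above rather than merely a grading reversal.
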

\begin{proof}
In this proof, we absorb the Planck constant $\hslash$ into the parameters $\mu$ and $\nu$ for simplicity.
 Define maps $S$ and $H$ by
\[S:X^{\beta, u^{\ast}}_{\alpha}\longrightarrow (E^{c,\hslash}_{\mu\nu})_1,\;\; H :C_b(\mathbb{R}\times\mathbb{T}/\beta)\longrightarrow C(\mathbb{T}^2),\]
\[S(f)(x,y)=e(\frac{cy^2}{2\nu})f(-\frac{x}{2\mu},-y),\;\;H(\phi)(x,y)=\phi(-\frac{x}{2\mu},-y),\]
for $f \in X^{\beta, u^{\ast}}_{\alpha}$ and $\phi \in C_b(\mathbb{R}\times\mathbb{T}/\beta)$.
The straightforward calculations show that $S$ and $H$ are bijections and $S(f)\in (E^{c,\hslash}_{\mu\nu})_1$.
Also, it is not hard to show that $S$ and $H$ satisfy the following conditions,
\begin{equation*}
S(\phi\cdot f)=H(\phi)\cdot S(f),\;\;S(f\cdot\phi)=S(f)\cdot H(\phi),
\end{equation*}
\begin{equation*}
\langle S(f),S(g)\rangle_L=H(\langle f,g\rangle_L),\;\;\langle S(f),S(g)\rangle_R=H(\langle f,g\rangle_R).
\end{equation*}
 Therefore $E^{c,\hslash}_{\mu\nu}$ is isomorphic to $C_b(\mathbb{R}\times\mathbb{T}/\beta)\rtimes X^{\beta, u^{\ast}}_{\alpha}$ in a fashion preserving the bimodule structure.
\end{proof}

\begin{proposition}
Let $\Xi$  be $E^{c,\hslash}_{\mu\nu}$-$D^{c,\hslash}_{\mu\nu}$ bimodule given in the previous section.
Then $E^{c,\hslash}_{\mu\nu}$ is isomorphic in a fashion preserving the bimodule structure to $D^{c,\hslash}_{\frac{1}{4\mu},\frac{\nu}{2\mu}}$.
\end{proposition}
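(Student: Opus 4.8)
The plan is to exhibit the desired isomorphism as a composite of maps that are already in hand, so that the proof reduces to a short diagram chase. Recall from the discussion preceding Lemma~\ref{MoritaE} that, with the commuting actions $\alpha$, $\beta$ and the unitary $u$ fixed there, Abadie's results in \cite{Ab4} provide isomorphisms
\[
\theta_1\colon D^{c,\hslash}_{\mu\nu}\ \xrightarrow{\ \cong\ }\ C_b(\mathbb{R}\times\mathbb{T}/\alpha)\rtimes X^{\alpha,u}_\beta,
\qquad
\theta_2\colon D^{c,\hslash}_{\frac{1}{4\mu},\frac{\nu}{2\mu}}\ \xrightarrow{\ \cong\ }\ C_b(\mathbb{R}\times\mathbb{T}/\beta)\rtimes X^{\beta,u^{\ast}}_\alpha,
\]
which can be chosen so that the $D^{c,\hslash}_{\mu\nu}$-$D^{c,\hslash}_{\frac{1}{4\mu},\frac{\nu}{2\mu}}$ imprimitivity bimodule realizing their Morita equivalence is carried onto ${\Xi}'$, after interchanging the roles of the left and right coefficient algebras (i.e. after replacing ${\Xi}'$ by the conjugate bimodule $\overline{{\Xi}'}$, which reverses the sides of the two actions) so as to match the left-$E^{c,\hslash}_{\mu\nu}$, right-$D^{c,\hslash}_{\mu\nu}$ convention of Section~1. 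Lemma~\ref{MoritaE} supplies a third isomorphism $\theta_3\colon E^{c,\hslash}_{\mu\nu}\xrightarrow{\ \cong\ }C_b(\mathbb{R}\times\mathbb{T}/\beta)\rtimes X^{\beta,u^{\ast}}_\alpha$, together with the compatible pair $S$, $H$. I then define $\theta:=\theta_2^{-1}\circ\theta_3\colon E^{c,\hslash}_{\mu\nu}\to D^{c,\hslash}_{\frac{1}{4\mu},\frac{\nu}{2\mu}}$, which is visibly a $\ast$-isomorphism.

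To check that $\theta$ respects the bimodule structure, I transport the compatibility data through the chain. The required bijection $T\colon\Xi\to\overline{{\Xi}'}$ is assembled from the map $S$ of Lemma~\ref{MoritaE}---regarded, via the description of a generalized fixed point algebra of the present kind as a crossed product of its $\mathbb{T}$-fixed-point subalgebra by its first spectral subspace, as acting on all of $\Xi$ rather than only on $(E^{c,\hslash}_{\mu\nu})_1$---together with the identifications $\theta_1$ and $\theta_2$. Since every arrow in the chain was already verified to intertwine the left and right module actions and to carry each of the two inner products to the corresponding one, so does the composite; hence $T(\Psi\cdot\xi)=\theta(\Psi)\cdot T(\xi)$, $T(\xi\cdot\Phi)=T(\xi)\cdot\theta_1(\Phi)$, and $T$ matches $\langle\cdot,\cdot\rangle_E$ with the $D^{c,\hslash}_{\frac{1}{4\mu},\frac{\nu}{2\mu}}$-valued inner product and $\langle\cdot,\cdot\rangle_{D_{\mu\nu}}$ with itself. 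The one structural fact I would state explicitly for this is the principle of \cite{AEE} and \cite{Ab4}: a generalized fixed point algebra of the type occurring here is recovered, together with its Morita context, from its fixed-point algebra under the dual $\mathbb{T}$-action and its first spectral subspace as a Hilbert bimodule---which is exactly why the spectral-subspace-level data $S$, $H$ from Lemma~\ref{MoritaE} determine $\theta$ and $T$ on the whole algebra and module.

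The step I expect to be the real work is the bookkeeping buried in the first paragraph: verifying, from the explicit formulas of Sections~1 and 2, that the bimodule $\Xi$ genuinely coincides with $\overline{{\Xi}'}$. Here one must track the $\mathbb{Z}$-cocycles $\overline{e}(ckp(y-\hslash p\nu))$ and $e(cpk(y-\hslash k\nu))$, the substitution $(x,y)\mapsto(-\tfrac{x}{2\mu},-y)$, and the Gaussian factor $e(\tfrac{cy^2}{2\nu})$ appearing in the definition of $S$, and in particular confirm that the dual $\mathbb{T}$-action $\varrho$ on $E^{c,\hslash}_{\mu\nu}$ is carried by $\theta$ to the dual action $\varsigma$ on $D^{c,\hslash}_{\frac{1}{4\mu},\frac{\nu}{2\mu}}$, so that ``first spectral subspace'' refers to matching objects on the two sides. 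These computations are routine but lengthy; once they are done, the rest is formal.
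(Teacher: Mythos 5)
Your proposal is correct and follows essentially the same route as the paper: the paper's proof is precisely the composition of Lemma~\ref{MoritaE} with Abadie's identification of $D^{c,\hslash}_{\frac{1}{4\mu},\frac{\nu}{2\mu}}$ with $C_b(\mathbb{R}\times\mathbb{T}/\beta)\rtimes X^{\beta,u^{\ast}}_{\alpha}$, exactly your $\theta_2^{-1}\circ\theta_3$. You simply make explicit the bookkeeping (the conjugate-bimodule side convention and the spectral-subspace reconstruction principle from \cite{AEE} and \cite{Ab4}) that the paper leaves implicit in its one-line argument.
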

\begin{proof}
Since $D^{c,\hslash}_{\frac{1}{4\mu},\frac{\nu}{2\mu}}$ is isomorphic to $C_b(\mathbb{R}\times\mathbb{T}/\beta)\rtimes X^{\beta, u^{\ast}}_{\alpha}$, Lemma \ref{MoritaE} shows that $E^{c,\hslash}_{\mu\nu}$ is isomorphic to $D^{c,\hslash}_{\frac{1}{4\mu},\frac{\nu}{2\mu}}$.
\end{proof}

\section{Grassmannian connections and curvature on quantum Heisenberg Manifolds}

As mentioned in \cite{C2}, projective modules are the proper generalizations of vector bundles when taking the view point of noncommutative geometry. By the Gelfand-Naimark theorem we can identify a commutative $C^{\ast}$-algebra $A$ with $C(X)$ for a locally compact space $X$, and the Serre-Swan theorem says that there is a  1-1 correspondence between the category of (smooth) vector bundles over a compact manifold with bundle maps and the category of finitely generated projective modules over commutative algebras with module morphisms. So, many times, a geometric property on X can be understood in terms of a corresponding algebraic property on the algebra $C(X)$. J. L. Koszul established an algebraic version of differential geometry, in particular, algebraic concepts for connections and curvature on a projective module over a commutative associative algebra in \cite{Kosz}. Before we state Connes' definitions of connections and curvature on a projective module over a noncommutative $C^{\ast}$-algebra, we mention the general algebraic definition of the connection on a module described in \cite{Kosz}.

Let $\Bbbk$ be a commutative ring with unit and $A$ be a commutative, associative algebra over $\Bbbk$ having unit element. Then a derivation $X$ is an element of $Hom_{\Bbbk}(A,A)$ with the condition, $X(ab)=(Xa)b+a(Xb)$. Denote the set of derivations by $D$; then $D$ is obviously an $A$-module with usual operations and has a natural Lie algebra structure.
A connection on $A$-module(commutative case) is described as a derivation law in \cite{Kosz}.
\begin{definition}\cite{Kosz}
A \textit{derivation law} $\nabla$ is an element of $Hom_A(D,Hom_{\Bbbk}(M,M))$, where $M$ is a unitary $A$-module, satisfying
\begin{enumerate}
 \item ${\nabla}_{X+Y}={\nabla}_X+{\nabla}_Y$, ${\nabla}_{aX}=a{\nabla}_X$,
 \item   ${\nabla}_X(au)=(Xa)u + a{\nabla}_Xu$,
\end{enumerate}
 for $a \in A$ , $u \in M$ and $X,Y \in D$.
\end{definition}
As can be seen, $\nabla_X$ is defined on a $A$-module $M$. So $\nabla_X$ can be viewed as a differentiation of differentiable sections of a bundle in a certain direction, since the set of differentiable sections of a vector bundle has an obvious module structure.
 Thus, this derivation law corresponds to a connection on a projective module in modern geometry, and Connes' definition of a connection can be viewed as a noncommutative extension of Koszul's derivation law.

   Now we state the setting developed by Connes in \cite{C1}. Let $G$ be a Lie group with Lie algebra ${\mathfrak g}$, and let $\alpha$ be an action of $G$ as automorphisms of a $C^{\ast}$-algebra $A$. We let $A^{\infty}=\{a \in A\;|\;g\mapsto\alpha_{g}(a)\;\text{is smooth in norm}\}$. Then the infinitesimal form of $\alpha$ gives an action, $\delta$, of the Lie algebra ${\mathfrak g}$ of $G$, as derivations on $A^{\infty}$.

By Lemma 1 in \cite{C1}, every finitely generated projective $A$-module $\Xi$ has a smooth dense sub-projective module ${\Xi}^{\infty}$ over $A^{\infty}$ such that $\Xi$ is isomorphic to ${\Xi}^{\infty}\otimes_{A^{\infty}}A$. Moreover, $A^{\infty}$ is dense in $A$ and ${\Xi}^{\infty}$ is dense in $\Xi$. Note that we say ``projective'' when we mean ``finitely generated projective''. Also we will denote ${\Xi}^{\infty}$ and $A^{\infty}$ by  $\Xi$ and $A$ for notational simplicity for the general definitions.

 Also, we can always equip $\Xi$ with an $A$-valued positive definite inner product $\langle \cdot,\cdot  \rangle_A $, called a Hermitian metric, such that $\langle \xi , \eta \rangle_A^{\ast}= \langle \eta , \xi \rangle_A\; ,\langle \xi , \eta a \rangle_A = \langle \xi , \eta \rangle_A a$,
for $\xi, \eta \in \Xi$ and $a \in A$.

\begin{definition}\cite{C1} Let $\Xi$, $A$ and ${\mathfrak g}$ be as above.
A \textit{connection} $\nabla$ is a linear map from $\Xi$ to $\Xi \otimes {\mathfrak g}^{\ast}$ such that
\begin{equation*}\label{conn}
\nabla_{X}(\xi a)=(\nabla_{X}(\xi))a +\xi(\delta_{X}(a)),
\end{equation*}
for all $X \in {\mathfrak g}$, $\xi \in \Xi$ and $a\in A$. We say that the connections are \textit{compatible} with the Hermitian metric if
\begin{equation*}\label{comp}
\delta_{X}(\langle \xi, \eta \rangle_A) = \langle \nabla_{X}\xi, \eta \rangle_A + \langle \xi, \nabla_{X}\eta \rangle_A.
\end{equation*}
\end{definition}

We denote the set of compatible connections by $CC(\Xi)$.

According to Connes' theory, we can always define a compatible connection on a projective module over $A$ as follows. For a given unital $C^{\ast}$-algebra $A$ and a projection $Q \in A$, $QA$ is a projective right $A$-module in an obvious way. As described in \cite{C1}, we define a connection on $QA$, called ``Grassmannian connection'', by
\begin{equation*}\label{Grass}
\nabla^{0}_X(\xi)=Q\delta_X(\xi)\;\in QA,\;\;\quad\text{for all}\quad \xi\in QA\quad\text{and}\quad X\in {\mathfrak g}.
\end{equation*}
 Obviously, this is a compatible connection with the canonical Hermitian metric on $QA$, such that $\langle
\xi,\eta\rangle={\xi}^{\ast}\eta$ for $\xi,\eta \in QA$.

For given right $A$-module $\Xi$, let $E=End_A(\Xi)$. Then the following facts are known in \cite{CR}.
If $\nabla$ and ${\nabla}'$ are any two connections, then $\nabla_X - {\nabla}'_X$ is an element of $E$, for each $X \in {\mathfrak g}$. If $\nabla$ and ${\nabla}'$ are both compatible with the Hermitian metric, then $\nabla_X -{\nabla}'_X$ is a skew-symmetric element of $E$ for each $X \in {\mathfrak g}$. Thus, once we have a compatible connection $\nabla$, every other compatible connection ${\nabla}'$ is of the form $\nabla +\mu$, where $\mu$ is a linear map from $\mathfrak{g}$ into $E^s$, a set of skew-symmetric element of $E$, such that  ${\mu_X}^{\ast}=-\mu_X$ for $X\in\mathfrak{g}$.

The curvature of a connection $\nabla$ is defined to be the alternating bilinear form $\Theta_{\nabla}$ on $\mathfrak{g}$, given by
\begin{equation*}\label{cur}
\Theta_{\nabla}(X,Y)=\nabla_{X}\nabla_{Y}-\nabla_{Y}\nabla_{X}-\nabla_{[X,Y]},
\end{equation*}
for $X$,$Y \in \mathfrak{g}$. It is not hard to check that the values of $\Theta$ are in $E$ for a connection $\nabla$, and the values of $\Theta$ are in $E^s$ if a connection $\nabla$ is compatible with respect to the Hermitian metric.

For given $A$-valued inner product $\langle\cdot,\cdot\rangle_A$, we can define an $E$-valued inner product $\langle\cdot,\cdot\rangle_E$ by
\[\langle \xi,\eta\rangle_E\zeta=\xi\langle\eta,\zeta\rangle_A,\]
for $\xi,\eta,\zeta \in \Xi$. So there is a natural bimodule structure (left $E$-right $A$) on $\Xi$.

If A has a faithful $\alpha$-invariant trace, $\tau$, then $\tau$ determines a faithful trace, $\tau_E$, on $E$, defined by
\[\tau_E(\langle \xi,\eta \rangle_E)=\tau(\langle \eta,\xi\rangle_A).\]

To define the Yang-Mills functional on $CC(\Xi)$, we need a bilinear form on the space of alternating 2-forms with values in $E$.  Let $\{Z_1,\dotsb,Z_n\}$ be a basis for $\mathfrak{g}$. We define a bilinear form $\{\cdot,\cdot\}_E$ by
\[\{\Phi,\Psi\}_E=\sum_{\i<j}\Phi(Z_i\wedge Z_j)\Psi(Z_i\wedge Z_j),\]
for alternating $E$-valued 2-forms $\Phi,\Psi$. Clearly, its values are in $E$.

Then the Yang-Mills functional, $YM$, is defined on $CC(\Xi)$ by
\begin{equation}\label{YM}
YM(\nabla)=-\tau_E(\{\Theta_{\nabla},\Theta_{\nabla}\}_E).
\end{equation}

Now let $\Xi$ be the left-$E^{c,\hslash}_{\mu\nu}$ and right-$D^{c,\hslash}_{\mu\nu}$ bimodule described in the previous section.
Let $G$ be the reparametrized Heisenberg Lie group in (\ref{rep-H}) with the Lie algebra $\mathfrak{g}$; then for a positive integer $c$, the basis of ${\mathfrak g}$ is given by $\{X,Y,Z\}$ with $[X,Y]=cZ$, where $X=(0,1,0),Y=(1,0,0),Z=(0,0,1)$.
 Using the Heisenberg group action $G$ on $D^{c,\hbar}_{\mu\nu}$ given by the formula (\ref{H-action3}), we calculate the infinitesimal form, $\delta$, of the action $\mathfrak{g}$ on $D^{c,\hbar}_{\mu\nu}$ in the following way. For $\Phi \in (D^{c,\hbar}_{\mu\nu})^{\infty}$,
\[(\delta_{(r,s,t)}\Phi)(x,y,p)=\frac{d}{dk}L_{(exp(k(r',s',t'))})\Phi(x,y,p)\Bigl|_{k=0}\]
\[=2{\pi}ip(t+cs(x-{\hbar}p\mu))\Phi(x,y,p)-r\frac{\partial\Phi}{\partial x}(x,y,p)-s\frac{\partial\Phi}{\partial y}(x,y,p),\]
where $(r',s',t')\in G$ and $(r,s,t)\in {\mathfrak g}$.
Then the associated derivations $\delta_X, \delta_Y,\delta_Z$ are given by
\begin{equation*}\label{der1}
(\delta_{X}\Phi)(x,y,p)=2\pi icp(x-\hbar p\mu)\Phi(x,y,p)-\frac{\partial\Phi}{\partial y}(x,y,p),
\end{equation*}
\begin{equation*}\label{der2}
(\delta_{Y}\Phi)(x,y,p)=-\frac{\partial\Phi}{\partial x}(x,y,p),
\end{equation*}
\begin{equation*}\label{der3}
(\delta_{Z}\Phi)(x,y,p)=2\pi ipc\,\Phi(x,y,p).
\end{equation*}

To find a compatible connection on the projective module ${\Xi}^{\infty}$, we use the canonical Grassmannian connection ${\nabla}^0$ on $Q(D^{c,\hslash}_{\mu\nu})^{\infty}$ for a projection $Q\in (D^{c,\hslash}_{\mu\nu})^{\infty}$.
As we know, $QD^{c,\hslash}_{\mu\nu}$ and $\Xi$ are projective modules over the same $C^{\ast}$-algebra $D^{c,\hslash}_{\mu\nu}$ on the right. So we can construct a module map between $QD^{c,\hslash}_{\mu\nu}$ and $\Xi$, which preserves the dense subalgebras $Q(D^{c,\hslash}_{\mu\nu})^{\infty}$ and ${\Xi}^{\infty}$ as follows. As shown in Proposition 2.1 in \cite{Rief3}, for given $E^{c,\hslash}_{\mu\nu}$-$D^{c,\hslash}_{\mu\nu}$ bimodule $\Xi$, there exits a function $R \in \Xi$ such that $\langle R,R \rangle_E=Id_E$ and $Q=\langle R,R \rangle_D$ is a projection of $D^{c,\hslash}_{\mu\nu}$ since $E^{c,\hslash}_{\mu\nu}$ and $D^{c,\hslash}_{\mu\nu}$ both have identity elements.  We will show that we can choose $R$ smooth later in this section.
The module isomorphism $\phi : \Xi \longrightarrow QD^{c,\hslash}_{\mu\nu}$ is given by $\phi(f)=\langle R,f \rangle_D$, and the corresponding Grassmannian connection on ${\Xi}^{\infty}$ is given by
\begin{equation}\label{Gconnection}
\nabla^{0}_X(f)=R\cdot \delta_X(\langle R,f \rangle_D ),
\end{equation}
for $ X \in \mathfrak{g}$, $f \in {\Xi}^{\infty}$, and $R \in {\Xi}^{\infty}$ such that $Q=\langle R,R\rangle_D$. It is not hard to check that $\nabla^{0}$ in (\ref{Gconnection}) is compatible with $\langle\cdot,\cdot\rangle_D$.
\begin{remark}
Notice that, since $\langle R,R\rangle_E=Id_E$, the trace of the projection $Q=\langle R,R\rangle_D$ is $2\hslash\mu$.
\end{remark}

For the notational simplicity, we denote ${\Xi}^{\infty}$, $(D^{c,\hslash}_{\mu\nu})^{\infty}$ and $(E^{c,\hslash}_{\mu\nu})^{\infty}$ by $\Xi$, $D^{c,\hslash}_{\mu\nu}$ and $E^{c,\hslash}_{\mu\nu}$ in the rest of this section.

We now compute the corresponding curvature $\Theta^{0}_\nabla(X,Y)$ of
the Grassmannian connection $\nabla^{0}$ for $X,Y \in {\mathfrak
g}$. Since $\Theta^{0}_\nabla(X,Y)$ is
$E^{c,\hslash}_{\mu\nu}$-valued, using formula (\ref{Gconnection})
we can calculate $\Theta^{0}_\nabla(X,Y)\cdot f$ as follows.
\[\Theta^{0}_\nabla(X,Y)\cdot f=(\nabla^0_{X}\nabla^0_{Y}-\nabla^0_{Y}\nabla^0_{X}-\nabla^0_{[X,Y]})\cdot f=\nabla^{0}_{X}(\nabla^{0}_{Y}(f))-\nabla^{0}_{Y}(\nabla^{0}_{X}(f))-\nabla^{0}_{[X,Y]}(f)\]
\[=R\cdot(\delta_X\langle R,R\rangle_D\delta_Y\langle R,f\rangle_D-\delta_Y\langle R,R\rangle_D\delta_X\langle R,f\rangle_D),\]
for $f \in \Xi$, since $R\cdot\langle R,R\rangle_D=R$ and $\delta$ is a Lie algebra homomorphism, i.e.$[\delta_X,\delta_Y]=\delta_{[X,Y]}$.

 At first glance, it seems that the curvature ${\Theta}^0_{\nabla}(X,Y)$ would depend on the value $f$ at which it is evaluated. Since the values of the curvature lie in $(E^{c,\hslash}_{\mu\nu})^s$, the set of skew-symmetric elements of
$E^{c,\hslash}_{\mu\nu}$, we have the following property.
 \begin{lemma}
For the element of $R$ constructed as above,
\[R\cdot(\delta_X\langle R,R\rangle_D\delta_Y\langle R,g\rangle_D-\delta_Y\langle R,R\rangle_D\delta_X\langle
R,g\rangle_D)\]
\[=(R\cdot(\delta_X\langle R,R\rangle_D\delta_Y\langle R,R\rangle_D-\delta_Y\langle R,R\rangle_D\delta_X\langle
R,R\rangle_D))\cdot\langle R,g\rangle_D,\]
$g \in \Xi$ and $X,Y \in \mathfrak{g}$.
\end{lemma}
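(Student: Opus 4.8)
The plan is to reduce the statement to two elementary facts about $R$ and the projection $Q=\langle R,R\rangle_D$, and then to a short computation. The two facts are: (i) $R$ is fixed by right multiplication by $Q$, i.e.\ $R\cdot Q=R$, and, dually, $\langle R,g\rangle_D=Q\,\langle R,g\rangle_D$ for every $g\in\Xi$; and (ii) the standard projection identity $Q\,(\delta_X Q)\,Q=0$ for each $X\in\mathfrak{g}$. Fact (i) comes from $\langle R,R\rangle_E=\mathrm{Id}_E$ together with the imprimitivity-bimodule relation $\langle\xi,\eta\rangle_E\,\zeta=\xi\,\langle\eta,\zeta\rangle_D$: taking $\xi=\eta=\zeta=R$ gives $R=\mathrm{Id}_E\cdot R=R\cdot Q$, and then, using self-adjointness of $Q$ and the conjugate-linearity relation $\langle\xi\cdot d,\eta\rangle_D=d^{*}\langle\xi,\eta\rangle_D$, one gets $Q\,\langle R,g\rangle_D=\langle R\cdot Q,\,g\rangle_D=\langle R,g\rangle_D$. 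Fact (ii) follows by applying the derivation $\delta_X$ to $Q^{2}=Q$, which gives $(\delta_X Q)\,Q+Q\,(\delta_X Q)=\delta_X Q$, and then left-multiplying by $Q$.

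With these in hand I would argue termwise. Since the second summand on each side of the asserted identity is the first with $X$ and $Y$ interchanged, it suffices to prove
\[
R\cdot\bigl((\delta_X Q)\,\delta_Y\langle R,g\rangle_D\bigr)=R\cdot\bigl((\delta_X Q)(\delta_Y Q)\bigr)\cdot\langle R,g\rangle_D .
\]
Put $a=\langle R,g\rangle_D$, so $a=Qa$ by (i). The Leibniz rule for $\delta_Y$ applied to $a=Qa$ gives $\delta_Y a=(\delta_Y Q)\,a+Q\,\delta_Y a$, whence, left-multiplying by $Q\,(\delta_X Q)$ and using (ii),
\[
Q\,(\delta_X Q)\,\delta_Y a=Q\,(\delta_X Q)(\delta_Y Q)\,a+Q\,(\delta_X Q)\,Q\,\delta_Y a=Q\,(\delta_X Q)(\delta_Y Q)\,a .
\]
Now insert $R=R\cdot Q$ on the left and use associativity of the module action: $R\cdot\bigl((\delta_X Q)\,\delta_Y a\bigr)=R\cdot\bigl(Q\,(\delta_X Q)\,\delta_Y a\bigr)=R\cdot\bigl(Q\,(\delta_X Q)(\delta_Y Q)\,a\bigr)=R\cdot\bigl((\delta_X Q)(\delta_Y Q)\bigr)\cdot a$, which is the displayed identity. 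Subtracting its $X\leftrightarrow Y$ counterpart gives the lemma.

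I do not anticipate a genuine difficulty: the argument is purely formal once (i) and (ii) are available. The one place to be careful is bookkeeping — distinguishing the product in $D^{c,\hslash}_{\mu\nu}$ from the right module action on $\Xi$, remembering that $Q$ is self-adjoint, and choosing the correct conjugate-linearity convention for $\langle\cdot,\cdot\rangle_D$ when deriving $\langle R,g\rangle_D=Q\,\langle R,g\rangle_D$. All of this takes place inside the smooth subalgebra $(D^{c,\hslash}_{\mu\nu})^{\infty}$ and $\Xi^{\infty}$, where the $\delta_X$ are defined, which is legitimate since $R$ is taken to be smooth.
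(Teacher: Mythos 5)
Your proof is correct, but it follows a genuinely different route from the paper's. The paper proves the identity by invoking the general fact that the curvature of a compatible connection takes values in the skew-symmetric endomorphisms, writing out $\langle\Theta^{0}_\nabla(X,Y)\cdot f,g\rangle_D+\langle f,\Theta^{0}_\nabla(X,Y)\cdot g\rangle_D=0$, manipulating it into an identity involving $\delta_X\langle f,R\rangle_D$ and $\delta_Y\langle f,R\rangle_D$, and then specializing to $f=R$ before applying $R$ on the left. Your argument bypasses the compatibility of the connection entirely and works from the bare algebraic facts $R\cdot Q=R$, $\langle R,g\rangle_D=Q\langle R,g\rangle_D$, $Q(\delta_X Q)Q=0$, and the Leibniz rule; all of these are correctly derived (the imprimitivity relation $\langle\xi,\eta\rangle_E\zeta=\xi\langle\eta,\zeta\rangle_D$ with $\xi=\eta=\zeta=R$ does give $R\cdot Q=R$, and your conjugate-linearity convention matches the paper's, since $\langle\xi,\eta a\rangle_D=\langle\xi,\eta\rangle_D a$ and $\langle\xi,\eta\rangle_D^{\ast}=\langle\eta,\xi\rangle_D$ force $\langle\xi\cdot d,\eta\rangle_D=d^{\ast}\langle\xi,\eta\rangle_D$). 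Your approach in fact proves more: the identity $R\cdot\bigl((\delta_X Q)\,\delta_Y\langle R,g\rangle_D\bigr)=R\cdot\bigl((\delta_X Q)(\delta_Y Q)\bigr)\cdot\langle R,g\rangle_D$ holds term by term, so the antisymmetrization in $X$ and $Y$ is not needed at all, whereas the paper's argument only yields the antisymmetrized combination. What the paper's approach buys is a conceptual explanation (the lemma is a shadow of skew-symmetry of the curvature); what yours buys is a shorter, self-contained computation that does not rely on the earlier unproved general statement about compatible connections, and a strictly stronger conclusion.
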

\begin{proof}
Since the values of $\Theta^{0}_\nabla$ are in $(E^{c,\hslash}_{\mu\nu})^s$, $\Theta^{0}_\nabla(X,Y)$ satisfies
\begin{equation*}
\langle\Theta^{0}_\nabla(X,Y)\cdot f,g\rangle_D+\langle f,\Theta^{0}_\nabla(X,Y)\cdot g\rangle_D=0
\end{equation*}
 for $f,g \in \Xi$ and $X,Y \in \mathfrak{g}$. Then a straightforward calculation shows that
\[\langle f,R\rangle_D\delta_X\langle R,R\rangle_D\delta_Y\langle R,g\rangle_D-\langle f,R\rangle_D\delta_Y\langle
R,R\rangle_D\delta_X\langle R,g\rangle_D\]
\[=\delta_X\langle f,R\rangle_D\delta_Y\langle R,R\rangle_D\langle R,g\rangle_D-\delta_Y\langle f,R\rangle_D\delta_X\langle R,R\rangle_D\langle R,g\rangle_D.\]
Now let $f=R$; then we have
\begin{equation*}
\langle R,R\rangle_D\delta_X\langle R,R\rangle_D\delta_Y\langle R,g\rangle_D-\langle R,R\rangle_D\delta_Y\langle
R,R\rangle_D\delta_X\langle R,g\rangle_D
\end{equation*}
\begin{equation*}
=\delta_X\langle R,R\rangle_D\delta_Y\langle R,R\rangle_D\langle R,g\rangle_D-\delta_Y\langle R,R\rangle_D\delta_X\langle R,R\rangle_D\langle R,g\rangle_D.
\end{equation*}
By applying $R$ on both sides, we obtain the desired equation.
\end{proof}

Therefore, we can develop a formula for  $(E^{c,\hslash}_{\mu\nu})^s$-valued $\Theta^{0}_{\nabla}$ in closed form that does not depend on $f \in \Xi$ as follows.
\begin{proposition}\label{GrassCurv1}
For given basis elements of the Heisenberg Lie algebra ${\mathfrak g}$, $\{X,Y,Z\}$ with $[X,Y]=cZ$, where $c\in \mathbb{Z}^{+}$, we have
\begin{equation*}\label{Grass-curv 1}
\Theta^{0}_{\nabla}(X,Y)=\langle R\cdot(\delta_X\langle R,R\rangle_D\delta_Y\langle R,R\rangle_D-\delta_Y\langle
R,R\rangle_D\delta_X\langle R,R\rangle_D),R\rangle_E,
\end{equation*}
\begin{equation*}\label{Grass-curv 2}
\Theta^{0}_{\nabla}(X,Z)=\langle R\cdot(\delta_X\langle R,R\rangle_D\delta_Z\langle R,R\rangle_D-\delta_Z\langle
R,R\rangle_D\delta_X\langle R,R\rangle_D),R\rangle_E,
\end{equation*}
\begin{equation*}\label{Grass-curv 3}
\Theta^{0}_{\nabla}(Y,Z)=\langle R\cdot(\delta_Y\langle R,R\rangle_D\delta_Z\langle R,R\rangle_D-\delta_Z\langle
R,R\rangle_D\delta_Y\langle R,R\rangle_D),R\rangle_E.
\end{equation*}
\end{proposition}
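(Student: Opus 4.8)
The plan is to convert the abstract formula $\Theta^0_\nabla(X,Y)\cdot f = R\cdot(\delta_X\langle R,R\rangle_D\,\delta_Y\langle R,f\rangle_D-\delta_Y\langle R,R\rangle_D\,\delta_X\langle R,f\rangle_D)$, already derived above, into an honest endomorphism of $\Xi$ written as left multiplication by an element of $(E^{c,\hslash}_{\mu\nu})^s$. The point is that this expression, a priori depending on $f$, can be factored through $\langle R,f\rangle_D$ using the preceding Lemma, and then recognized as a left action coming from the $E$-valued inner product. Concretely, I would first recall that for $\xi\in\Xi$ the operator $g\mapsto \xi\cdot\langle\eta,g\rangle_D$ is exactly left multiplication by $\langle\xi,\eta\rangle_E\in E^{c,\hslash}_{\mu\nu}$, by the defining identity $\langle\xi,\eta\rangle_E\zeta=\xi\langle\eta,\zeta\rangle_A$ for the $E$-valued inner product stated in Section 3.

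Second, I would apply the Lemma with $g:=f$ to rewrite
\[
\Theta^0_\nabla(X,Y)\cdot f=\bigl(R\cdot(\delta_X\langle R,R\rangle_D\,\delta_Y\langle R,R\rangle_D-\delta_Y\langle R,R\rangle_D\,\delta_X\langle R,R\rangle_D)\bigr)\cdot\langle R,f\rangle_D,
\]
so that the curvature now acts on $f$ only through $\langle R,f\rangle_D$, with a fixed ``coefficient'' element $\eta:=R\cdot(\delta_X\langle R,R\rangle_D\,\delta_Y\langle R,R\rangle_D-\delta_Y\langle R,R\rangle_D\,\delta_X\langle R,R\rangle_D)\in\Xi$. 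Third, I would observe that by the definition of $\langle\cdot,\cdot\rangle_E$ this is precisely $\Theta^0_\nabla(X,Y)\cdot f=\langle \eta,R\rangle_E\cdot f$ for all $f\in\Xi$; since $\Xi$ is a full right module (the inner products generate everything), equality of the left actions forces $\Theta^0_\nabla(X,Y)=\langle\eta,R\rangle_E$, which is the asserted formula. The cases $(X,Z)$ and $(Y,Z)$ are identical word for word, just replacing $Y$ by $Z$, so nothing new is needed — in particular the bracket relations $[X,Z]=[Y,Z]=0$ are already built into the derivation of the displayed formula via $[\delta_X,\delta_Y]=\delta_{[X,Y]}$ and do not re-enter here.

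The only genuine subtlety, and the step I would be most careful about, is the last one: passing from ``$T\cdot f=\langle\eta,R\rangle_E\cdot f$ for all $f\in\Xi$'' to ``$T=\langle\eta,R\rangle_E$ as elements of $E^{c,\hslash}_{\mu\nu}=\mathrm{End}_{D^{c,\hslash}_{\mu\nu}}(\Xi)$.'' This is immediate from the fact that $E^{c,\hslash}_{\mu\nu}$ acts on $\Xi$ and two endomorphisms agreeing on all of $\Xi$ coincide; one just has to make sure the identification of $\Theta^0_\nabla(X,Y)$ with a bona fide element of $E^{c,\hslash}_{\mu\nu}$ (rather than merely of $\mathrm{End}_{D}(\Xi)$) is legitimate, which is guaranteed because $\langle\cdot,\cdot\rangle_E$ takes values in $E^{c,\hslash}_{\mu\nu}$ and the values of a compatible connection's curvature lie in $(E^{c,\hslash}_{\mu\nu})^s$ as recorded in Section 3. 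Everything else is bookkeeping with the module identities $R\cdot\langle R,R\rangle_D=R$ and the Leibniz rule for $\delta$, both already in hand.
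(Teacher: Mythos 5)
Your proposal is correct and follows essentially the same route as the paper: apply the preceding lemma to factor the curvature's action through $\langle R,f\rangle_D$, recognize $\eta\cdot\langle R,f\rangle_D$ as $\langle\eta,R\rangle_E\cdot f$ via the defining identity of the $E$-valued inner product, and conclude from the fact that the identity holds for all $f\in\Xi$. Your explicit attention to the final step (two endomorphisms agreeing on all of $\Xi$ coincide) is a point the paper passes over with only the phrase ``since this equation holds for every $f$,'' but it is the same argument.
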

\begin{proof}
For given basis elements $\{X,Y,Z\}$ with $[X,Y]=cZ$ of the Heisenberg Lie algebra $\mathfrak{g}$, the previous lemma implies that
\begin{equation*}
\Theta^{0}_\nabla(X,Y)\cdot f=\nabla^{0}_{X}(\nabla^{0}_{Y}(f))-\nabla^{0}_{Y}(\nabla^{0}_{X}(f))-\nabla^{0}_{[X,Y]}(f)
\end{equation*}
\begin{equation*}
=R\cdot(\delta_X\langle R,R\rangle_D\delta_Y\langle R,f\rangle_D-\delta_Y\langle R,R\rangle_D\delta_X\langle
R,f\rangle_D
\end{equation*}
\begin{equation*}
=(R\cdot(\delta_X\langle R,R\rangle_D\delta_Y\langle R,R\rangle_D-\delta_Y\langle R,R\rangle_D\delta_X\langle
R,R\rangle_D))\cdot\langle R,f\rangle_D
\end{equation*}
\begin{equation*}
=\langle R\cdot(\delta_X\langle R,R\rangle_D\delta_Y\langle R,R\rangle_D-\delta_Y\langle R,R\rangle_D\delta_X\langle
R,R\rangle_D),R\rangle_E\cdot f.
\end{equation*}
Since this equation holds for every $f \in \Xi$ , we have
\begin{equation*}
\Theta^{0}_{\nabla}(X,Y)=\langle R\cdot(\delta_X\langle R,R\rangle_D\delta_Y\langle R,R\rangle_D-\delta_Y\langle
R,R\rangle_D\delta_X\langle R,R\rangle_D),R\rangle_E.
\end{equation*}
Similarly, we can establish the second and the third equalities.
\end{proof}

The above proposition suggests that the curvature of the Grassmannian connection on $\Xi$ can be computed in terms of a smooth function $R \in \Xi$. In what follows, we will give a specific example of non-trivial smooth function $R\in \Xi$ that produces a Grassmannian curvature $\Theta^{0}_{\nabla}$ that is non-zero. In fact, they can be given by the formulas.
 Since $R$ is chosen to give a projection $Q$ of $D^{c,\hslash}_{\mu\nu}$ such that $Q=\langle
R,R\rangle_D$ and $\langle R,R\rangle_E=Id_E$, using the technique of Rieffel projection we let
\begin{equation*}
\langle
R,R\rangle_D(x,y,p)=g(x,y)\delta_{1}(p)+h(x,y)\delta_{0}(p)+\overline{g}(x+2\hslash\mu,y+2\hslash\nu)\delta_{-1}(p),
\end{equation*}
where $g, h \in \Xi$, and $h$ is real-valued. We also use the formula for $\langle \cdot,\cdot\rangle_D$, and obtain
\begin{enumerate}[({a}-1)]
\item $h(x,y)=\displaystyle{\sum_{k}}|R(x+k,y)|^2$,
\item $g(x,y)=\displaystyle{\sum_{k}}\overline{e}(ck(y-h\nu))R(x+k,y)\overline{R}(x-2\hslash\mu+k,y-2\hslash\nu)$.
\end{enumerate}
Using the fact that $\langle R,R\rangle_D$ is idempotent, we obtain
\begin{enumerate}[({b}-1)]
\item  $g(x,y)g(x-2\hslash\mu,y-2\hslash\nu)=0$,
\item  $g(x,y)[1-h(x,y)-h(x-2\hslash\mu,y-2\hslash\nu)]=0$,
\item  $|g(x,y)|^2+|g(x+2\hslash\mu,y+2\hslash\nu)|^2=h(x,y)-h^2(x,y)$.
\end{enumerate}
We also want $Q$ to be in $D_0$, so we require
\begin{enumerate}[({c}-1)]
\item $h(x,y)=h(x+k,y),\;\;g(x,y)=\overline{e}(ck(y-\hslash\nu))g(x+k,y)$.
\end{enumerate}
Since $\langle R,R\rangle_E=Id_E$, i.e
\begin{equation*}
\;\sum_{k}e(cpk(y-\hslash
k\nu))\overline{R}(x-2k\hslash\mu,y-2k\hslash\nu)R(x-2k\hslash\mu+p,y-2k\hslash\nu)=Id_{\Xi}(x,y)\delta_{0}(p),
\end{equation*}
we have the following.
\begin{enumerate}[({d}-1)]
\item $\displaystyle{\sum_k} |R(x-2k\hslash\mu,y-2k\hslash\nu)|^2=1$ if $p=0$,
\item  $\displaystyle{\sum_{k}e(cpk(y-\hslash
k\nu))\overline{R}(x-2k\hslash\mu,y-2k\hslash\nu)R(x-2k\hslash\mu+p,y-2k\hslash\nu)=0}$ if $p\ne 0$.
\end{enumerate}

Now we assume that $R$ is a compactly supported real-valued function of one variable $x$ for simplicity, and we define $R$ as follows : let $R(x)$ be 0 on $(-2\hslash\mu,-\hslash\mu]$, smooth on $(-\hslash\mu,-\frac{1}{2}\hslash\mu)$ and 1 on
$[-\frac{1}{2}\hslash\mu,0]$, where $|2\hslash\mu|<\frac{1}{2}$, and define $R$ on $[0,2\hslash\mu)$ by
$R(x)=\sqrt{1-|R(x-2\hslash\mu)|^2}$. The bump function argument in differential geometry guarantees the existence of the smooth function $R$.

Then this definition implies that $h(x,y)$ and $g(x,y)$ in (a-1) and (a-2) should have only one term that is not equal to zero, although which term is non-zero depends on $x$. So we have, for $x\in (-\frac{1}{2},\frac{1}{2})$,
\begin{enumerate}[({\textbf{A}}-1)]
\item $h(x)=R^2(x)$, $g(x,y)=R(x)R(x-2\hslash\mu)$.
\end{enumerate}
Also, the condition in (c-1) suggests that we should extend $h$ and $g$ by
\begin{enumerate}[({\textbf{A}}-2)]
\item $h(x)=R^2(x+k)$, $g(x,y)=\overline{e}(k(y-\hslash\nu))R(x+k)R(x+k-2\hslash\mu)$,
\end{enumerate}
for $x\in (-\frac{1}{2}-k,\frac{1}{2}-k)$. With these functions, $ h(x) $ and $g(x,y)$, we can rewrite
equations (b-1)--(b-3) as follows.
\begin{enumerate}[({\textbf{B}}-1)]
\item $R^2(x)R(x-2\hslash\mu)R(x+2\hslash\mu)=0$,
\item $R^2(x)+R^2(x-2\hslash\mu)=1$,
\item $R^2(x-2\hslash\mu)+R^2(x+2\hslash\mu)+R^2(x)-1=0$.
\end{enumerate}
The above conditions ({\textbf{A}-1)--({\textbf{B}-3) imply the following equation.
\begin{enumerate}[({\textbf{C}}-1)]
\item $R(x)R(x-2l\hslash\mu)=0\;\; \text{if}\;\; |l|\geq 2$.
\end{enumerate}
Also, the equations (d-1) and (d-2) give the following.
\begin{enumerate}[({\textbf{C}}-2)]
\item $\displaystyle{\sum_k |R(x-2k\hslash\mu)|^2=1}$.
\end{enumerate}
\begin{enumerate}[({\textbf{C}}-3)]
\item $R(x)R(x+j)=0\;\; \text{if}\;\; j\ne 0$, where $j\in \mathbb{Z}$.
\end{enumerate}

\begin{remark}
The method we used above to construct the function $R$ is often seen as a mollifier technique used within the theory of partial differential equations to create families of smooth dense spanning functions.
\end{remark}

With this special function $R$, we calculate the corresponding Grassmannian curvature as follows.
\begin{lemma}\label{GrassCurv2}For the function $R$ described above,
\begin{equation*}\label{GC-1}
 {\Theta}^{0}_{\nabla}(X,Y)(x,y,p)=f_1(x)\delta_0(p),
\end{equation*}
\begin{equation*}\label{GC-2}
{\Theta}^{0}_{\nabla}(X,Z)(x,y,p)=0,
\end{equation*}
\begin{equation*}\label{GC-3}
{\Theta}^{0}_{\nabla}(Y,Z)(x,y,p)=f_2(x)\delta_0(p),
\end{equation*}
where $f_1$ and $f_2$ are smooth skew-symmetric periodic functions in the sense that $\overline{f_1}(x)=-f_1(x)$ and $\overline{f_2}(x)=-f_2(x)$, and $f_1(x-2\hslash k\mu)=f_1(x)$ and $f_2(x-2\hslash k\mu)=f_2(x)$ for any integer $k$.
\end{lemma}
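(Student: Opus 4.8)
The plan is to feed the explicit function $R$ into the closed formulas of Proposition~\ref{GrassCurv1} and reduce the whole statement to an elementary, if lengthy, computation inside the convolution algebra $D^{c,\hslash}_{\mu\nu}$, controlled throughout by the support relations (\textbf{A}-1)--(\textbf{C}-3). First I would record $Q=\langle R,R\rangle_D$ explicitly: by (a-1), (a-2), (\textbf{A}-1), (\textbf{A}-2) it is supported on $p\in\{-1,0,1\}$, with $Q(\cdot,\cdot,0)=h$, $Q(\cdot,\cdot,1)=g$, $Q(\cdot,\cdot,-1)=\overline{g}(x+2\hslash\mu,y+2\hslash\nu)$, and on the fundamental strip $h(x)=R^2(x)$ and $g(x,y)=R(x)R(x-2\hslash\mu)$ carry no $y$. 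Applying the three derivation formulas gives $\delta_Z Q(x,y,p)=2\pi ipc\,Q(x,y,p)$ (so its degree-$0$ part is zero), $\delta_Y Q=-\partial_x Q$, and $\delta_X Q(x,y,p)=2\pi icp(x-\hslash p\mu)Q(x,y,p)-\partial_y Q(x,y,p)$ (whose degree-$0$ part is $-\partial_y h=0$).

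Next I would form the commutators $W_{XY}=\delta_X Q\,\delta_Y Q-\delta_Y Q\,\delta_X Q$, $W_{XZ}$, $W_{YZ}$ in $D^{c,\hslash}_{\mu\nu}$. Since $Q$ lives in three degrees, each $W$ a priori lives in degrees $-2,\dots,2$; but the degree $\pm2$ components are products of the shape $g(x,y)\,g(x\mp2\hslash\mu,y\mp2\hslash\nu)$ and $y$-derivatives thereof, all of which vanish by (b-1)/(\textbf{C}-1) together with disjointness of the relevant $x$-supports, so each $W$ is again supported on $p\in\{-1,0,1\}$. For the pair $(X,Z)$ I expect $W_{XZ}$ to vanish outright: once the degree $\pm2$ parts are gone, a direct expansion of the degree-$0$ part telescopes, using $h(x)+h(x-2\hslash\mu)=1$ (from (\textbf{B}-2)) and the fact that $|g(x,y)|^2-|g(x+2\hslash\mu,y+2\hslash\nu)|^2$ is $y$-independent, to $2\pi ic\,\partial_y\big(|g(x,y)|^2-|g(x+2\hslash\mu,y+2\hslash\nu)|^2\big)=0$; Proposition~\ref{GrassCurv1} then gives $\Theta^{0}_{\nabla}(X,Z)=\langle R\cdot W_{XZ},R\rangle_E=0$.

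For the pairs $(X,Y)$ and $(Y,Z)$ I would compute $R\cdot W_{XY}$ and $R\cdot W_{YZ}$ from the right-action formula, simplifying the resulting sums with (\textbf{B}-1)--(\textbf{B}-3) and (\textbf{C}-1)--(\textbf{C}-3); the key point to establish is that each of these has $x$-support contained in $\operatorname{supp}(R)$, which is an interval of length $<\tfrac12$. Substituting this into the formula for $\langle\,\cdot\,,R\rangle_E$ exactly as in the derivation of (d-1), (d-2) forces all $p\neq0$ contributions to vanish, so $\Theta^{0}_{\nabla}(X,Y)=f_1\,\delta_0(p)$ and $\Theta^{0}_{\nabla}(Y,Z)=f_2\,\delta_0(p)$; and since $R$, $h$, and $g$ on the strip carry no $y$, neither do $R\cdot W_{XY}$, $R\cdot W_{YZ}$, nor $f_1,f_2$, which are therefore functions of $x$ alone, smooth because $R$ is. The two listed symmetries are then structural rather than computational: $\Theta^{0}_{\nabla}$ is $(E^{c,\hslash}_{\mu\nu})^s$-valued (noted before the lemma), and since the $E$-involution acts by complex conjugation on the $p=0$ component this gives $\overline{f_i}(x)=-f_i(x)$; and $f_i\delta_0\in(E^{c,\hslash}_{\mu\nu})_0$, whose defining relation $\psi(x,y)=\psi(x-2\hslash\mu,y-2\hslash\nu)$ forces $f_i(x)=f_i(x-2\hslash\mu)$, hence $f_i(x-2\hslash k\mu)=f_i(x)$ for every integer $k$.

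The main obstacle is the bookkeeping behind $R\cdot W_{XY}$, $R\cdot W_{YZ}$ and the claim that their $x$-supports stay inside $\operatorname{supp}(R)$: one must keep track of the phase factors from (\textbf{A}-2), the multiplication-by-$(x-\hslash p\mu)$ coefficients coming from $\delta_X$, and the translations built into both the convolution product and the right action, and verify that every term that could land at $p=\pm1$ (or widen the support) cancels in pairs via the idempotency relations (\textbf{B}-1)--(\textbf{B}-3) and the support relations (\textbf{C}-1)--(\textbf{C}-3). Everything else is the formal algebra of derivations and connections already set up in the section.
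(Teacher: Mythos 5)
Your proposal is correct and follows essentially the same route as the paper: both plug the explicit $R$ into the closed formulas of Proposition~\ref{GrassCurv1}, use the support and idempotency relations (\textbf{A}-1)--(\textbf{C}-3) (in particular (\textbf{C}-3) together with the formula for $\langle\cdot,\cdot\rangle_E$) to kill all $p\neq 0$ contributions and all $y$-dependence, and then read off skew-symmetry from the fact that the curvature is $(E^{c,\hslash}_{\mu\nu})^{s}$-valued and periodicity from membership in the fixed-point subspace $(E^{c,\hslash}_{\mu\nu})_0$. The only difference is organizational: you track the computation by spectral degree of $Q=\langle R,R\rangle_D$ and its products, whereas the paper writes out the full triple sum explicitly (and additionally verifies that $f_1$ is not identically zero, which the lemma statement itself does not require but is used later).
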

\begin{proof} By the formula in Proposition \ref{GrassCurv1}, we have, for $X,Y\in\mathfrak{g}$,
\[\Theta^{0}_{\nabla}(X,Y)=\langle R\cdot(\delta_X\langle R,R\rangle_D\delta_Y\langle R,R\rangle_D-\delta_Y\langle
R,R\rangle_D\delta_X\langle R,R\rangle_D),R\rangle_E.\]
The main reason that Grassmannian curvature is only supported at $p=0$ comes from the formula of $\langle\cdot,\cdot\rangle_E$ and the equation (\textbf{C}-3).
 By the related formulas and properties, the Grassmannian curvature with the special function $R$ is given as follows.
\[{\Theta}^{0}_{\nabla}(X,Y)(x,y,p)\]
\[=-\sum_{k}\sum_{q}\sum_{q'}2\pi iq'(x-2\hslash k\mu+2\hslash q\mu-\hslash q'\mu)\Bigl\{R^2(x-2\hslash k\mu)R^2(x-2\hslash k\mu+2\hslash q\mu)\]
\[\times R(x-2\hslash k\mu+2\hslash q\mu-2\hslash q'\mu)R'(x-2\hslash k\mu+2\hslash q\mu-2\hslash q'\mu) + R(x-2\hslash k\mu)R'(x-2\hslash k\mu)\]
\[\times R^2(x-2\hslash k\mu+2\hslash q\mu)R(x-2\hslash k\mu+2\hslash q\mu-2\hslash q'\mu)R'(x-2\hslash k\mu+2\hslash q\mu-2\hslash q'\mu)\Bigl\}\]
\[+\sum_k\sum_q\sum_{q'}e(cq'k(y-\hslash k\nu))2\pi i(q-q')(x-2\hslash k\mu+\hslash(q-q')\mu)\]
\[\times\Bigl\{ R^2(x-2\hslash k\mu)R(x-2\hslash k\mu+2\hslash q\mu)R'(x-2\hslash k\mu+2\hslash q\mu)R^2(x-2\hslash k\mu+2\hslash q\mu-2\hslash q'\mu)\]
\[+ R^2(x-2\hslash k\mu)R^2(x-2\hslash k\mu+2\hslash q\mu)R(x-2\hslash k\mu+2\hslash q\mu-2\hslash q'\mu)R'(x-2\hslash k\mu+2\hslash q\mu-2\hslash q'\mu)\Bigl\}\ne 0.\]
According to the equation (\textbf{C}-1), we know that for fixed integer $k$, $R^2(x-2\hslash k\mu)R^2(x-2\hslash k\mu+2\hslash q\mu)$ is not necessarily zero unless $|q|\ge 2$. So choose $k=0$ and $q'=0$ for simplicity.
Then the first triple term in the above expression for ${\Theta}^{0}_{\nabla}(X,Y)$ becomes zero and the curvature ${\Theta}^{0}_{\nabla}(X,Y)(x,y,p)$ becomes the following expression.
\[\sum_q 4\pi iq(x+\hslash q\mu) R^2(x)R^3(x+2\hslash q\mu)R'(x+2\hslash q\mu).\]

Since $R(x)R(x+2\hslash q\mu)=0$ for $|q|\ge 2$, the previous expression becomes the following.
\[\sum_q 4\pi iq(x+\hslash q\mu) R^2(x)R^3(x+2\hslash q\mu)R'(x+2\hslash q\mu)\]
\[=-4\pi i(x-\hslash\mu)R^2(x)R^3(x-2\hslash\mu)R'(x-2\hslash\mu)+4\pi i(x+\hslash\mu)R^2(x)R^3(x+2\hslash\mu)R'(x+2\hslash\mu)\ne 0.\]

For each $k$, we will have a similar expression as above that is not necessarily zero. Thus the Grassmannian curvature ${\Theta}^{0}_{\nabla}(X,Y)(x,y,p)$ is not trivial.

As seen above, ${\Theta}^{0}_{\nabla}(X,Y)(x,y,p)$ is only supported at $p=0$. Also it is not hard to see that ${\Theta}^{0}_{\nabla}(X,Y)(x,y,p)$ is a one-variable, periodic, complex valued function. So we denote ${\Theta}^{0}_{\nabla}(X,Y)(x,y,p)$ by $f_1(x)\delta_0(p)$ such that $f_1(x-2\hslash l\mu)=f_1(x)$ for an integer $l$. It is clear that $f_1(x)$ is a skew-symmetric function, i.e.  $\overline{f}_1(x)=-f_1(x)$.
Thus we conclude that ${\Theta}^{0}_{\nabla}(X,Y)(x,y,p)=f_1(x)\delta_0(p)$  for a skew-symmetric periodic function $f_1$.
Similarly, we can show that ${\Theta}^{0}_{\nabla}(Y,Z)(x,y,p)=f_2(x)\delta_0(p)$ for a periodic function $f_2$. Also another similar direct calculation shows that ${\Theta}^{0}_{\nabla}(X,Z)(x,y,p)=0$.
\end{proof}

\section{Critical points of the Yang-Mills functional on quantum Heisenberg manifolds}
Let $\Xi$ be the left-$E^{c,\hslash}_{\mu\nu}$ and right-$D^{c,\hslash}_{\mu\nu}$ projective bimodule given in Section 1.

\begin{definition}
For a function $G \in C^{\infty}(\mathbb{T}^2)$, define a \textbf{multiplication-type} element $\mathbf{G}$ of $E^{c,\hslash}_{\mu\nu}$ by
 \[\mathbf{G}(x,y,p)=G(x,y)\delta_0(p).\]
\end{definition}

\begin{remark}
$\mathbf{G}\in\ E^{c,\hslash}_{\mu\nu}$ means $\gamma_k(\mathbf{G})=\mathbf{G}$ for all $k\in \mathbb{Z}$. This implies $G(x-2\hslash k\mu,y-2\hslash k\nu)=G(x,y)$, for $(x,y)\in \mathbb{R}\times \mathbb{T}$. (Here we are identifying $\mathbb{T}^2$ with $\mathbb{R}^2/(2\pi\hslash\mu\mathbb{Z}\times2\pi\hslash\nu\mathbb{Z})$).
Thus $G$ has to be defined on $\mathbb{T}^2$ to produce an element $\mathbf{G}$ of $E^{c,\hslash}_{\mu\nu}$. Also any multiplication-type element $\mathbf{G}$ is a smooth element of $E^{c,\hslash}_{\mu\nu}$ since the corresponding function $G$ is smooth.
\end{remark}
\begin{lemma}\label{skewG}
Let $\mathbf{G}$ be a multiplication-type element of $E^{c,\hslash}_{\mu\nu}$. Then $\mathbf{G}$ is skew-symmetric, i.e $\mathbf{G}^{\ast}=-\mathbf{G}$ if and only if the corresponding function $G\in C^{\infty}(\mathbb{T}^2)$ is also skew-symmetric, i.e. $\overline{G}(x,y)=-G(x,y)$.
\end{lemma}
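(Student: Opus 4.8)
The plan is to compute the adjoint $\mathbf{G}^{\ast}$ directly from the involution on the crossed product $C_b(\mathbb{R}\times\mathbb{T})\times_{\sigma}\mathbb{Z}$, inside whose multiplier algebra $E^{c,\hslash}_{\mu\nu}$ sits as a $\ast$-subalgebra, and then to exploit the fact that a multiplication-type element is supported only at $p=0$, where the twisting action $\sigma$ is trivial. Recall that for $\Psi\in C_c(\mathbb{R}\times\mathbb{T}\times\mathbb{Z})$ the usual crossed-product involution has the form $\Psi^{\ast}(x,y,p)=\overline{(\sigma_{p}\Psi(\cdot,\cdot,-p))(x,y)}$, i.e. it combines complex conjugation, the flip $p\mapsto -p$, and a translation in the first variable governed by $\sigma_p$. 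Applying this to $\mathbf{G}(x,y,p)=G(x,y)\delta_0(p)$: since $\delta_0(-p)=\delta_0(p)$, the adjoint $\mathbf{G}^{\ast}$ is again supported at $p=0$, and at $p=0$ the translation $\sigma_0$ is the identity, so $\mathbf{G}^{\ast}(x,y,p)=\overline{G(x,y)}\,\delta_0(p)$.

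Given this identity, both directions follow immediately. If $\overline{G}=-G$ on $\mathbb{T}^2$, then $\mathbf{G}^{\ast}(x,y,p)=\overline{G(x,y)}\delta_0(p)=-G(x,y)\delta_0(p)=-\mathbf{G}(x,y,p)$, so $\mathbf{G}$ is skew-symmetric; note $\mathbf{G}^{\ast}$ is automatically still an element of $E^{c,\hslash}_{\mu\nu}$ because $E^{c,\hslash}_{\mu\nu}$ is closed under the involution. Conversely, if $\mathbf{G}^{\ast}=-\mathbf{G}$, then comparing the two sides at $p=0$ gives $\overline{G(x,y)}=-G(x,y)$ for every $(x,y)$, which, since by the preceding remark $G$ already descends to a smooth function on $\mathbb{T}^2$, is precisely the asserted skew-symmetry of $G\in C^{\infty}(\mathbb{T}^2)$.

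The only genuinely delicate point is getting the convention for the crossed-product involution exactly as it is used in Section 1 (in particular the direction of the $\sigma$-translation and whether it appears before or after conjugation); I would verify this against the multiplication formula implicit in the definitions of $\rho$ and $\gamma$. However, none of these conventions affect the conclusion: the essential and convention-independent fact is that $\sigma_0=\mathrm{id}$, so on multiplication-type elements the $\ast$-operation collapses to ordinary pointwise complex conjugation of $G$, and there is no analytic content beyond this bookkeeping.
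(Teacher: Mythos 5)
Your computation is correct and is exactly the verification the paper intends: the paper's own proof is simply ``left to the reader,'' and the essential point is the one you isolate, namely that on elements supported at $p=0$ the crossed-product involution (whatever the precise convention) reduces to pointwise complex conjugation of $G$, so $\mathbf{G}^{\ast}=-\mathbf{G}$ holds precisely when $\overline{G}=-G$. No gap.
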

\begin{proof}The proof is left to the reader.
\end{proof}

\begin{proposition}\label{MultiG}
Let $\mathbf{G}$ be a multiplication-type element of $E^{c,\hslash}_{\mu\nu}$ with a corresponding function $G \in C^{\infty}(\mathbb{T}^2)$.
i.e. $\mathbf{G}(x,y,p)=G(x,y)\delta_0(p)$. Then $\mathbf{G}$ is skew-symmetric if and only if $\mathbf{G}$ acts on ${\Xi}^{\infty}$ as a (skew-symmetric) multiplication operator, i.e. \[(\mathbf{G}\cdot f)(x,y)=-G(x,y)f(x,y)\;\; \text{for}\;\; f \in {\Xi}^{\infty}.\]
\end{proposition}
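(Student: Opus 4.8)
The plan is to read off the left action of a multiplication-type element straight from the formula $(\Psi\cdot f)(x,y)=\sum_{q\in\mathbb{Z}}\overline{\Psi}(x,y,q)f(x+q,y)$ for the $E^{c,\hslash}_{\mu\nu}$-action on $\Xi$, and then to combine the resulting identity with Lemma \ref{skewG}. First I would substitute $\Psi=\mathbf{G}$, where $\mathbf{G}(x,y,q)=G(x,y)\delta_{0}(q)$. Since $\delta_{0}$ is real-valued, $\overline{\mathbf{G}}(x,y,q)=\overline{G}(x,y)\delta_{0}(q)$, so every term with $q\neq 0$ drops out and one is left with the identity $(\mathbf{G}\cdot f)(x,y)=\overline{G}(x,y)f(x,y)$, valid for \emph{every} multiplication-type element with no skew-symmetry hypothesis. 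This also shows that $\mathbf{G}\cdot f$ is again a smooth element of $\Xi$ (compactly supported when $f$ is), since $G\in C^{\infty}(\mathbb{T}^{2})$ is bounded together with its derivatives, so the action indeed maps $\Xi^{\infty}$ into $\Xi^{\infty}$.

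For the forward implication, if $\mathbf{G}$ is skew-symmetric then Lemma \ref{skewG} gives $\overline{G}(x,y)=-G(x,y)$; feeding this into the identity above yields $(\mathbf{G}\cdot f)(x,y)=-G(x,y)f(x,y)$ for all $f\in\Xi^{\infty}$, which is the desired conclusion. For the converse, assume $(\mathbf{G}\cdot f)(x,y)=-G(x,y)f(x,y)$ for all $f\in\Xi^{\infty}$. Comparing with the identity $(\mathbf{G}\cdot f)(x,y)=\overline{G}(x,y)f(x,y)$ forces $\bigl(\overline{G}(x,y)+G(x,y)\bigr)f(x,y)=0$ for every $f\in\Xi^{\infty}$ and every $(x,y)\in\mathbb{R}\times\mathbb{T}$. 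Fixing a point $(x_{0},y_{0})$ and choosing $f\in C_{c}^{\infty}(\mathbb{R}\times\mathbb{T})\subseteq\Xi^{\infty}$ with $f(x_{0},y_{0})=1$ gives $\overline{G}(x_{0},y_{0})=-G(x_{0},y_{0})$; since the point was arbitrary, $G$ is skew-symmetric, and Lemma \ref{skewG} then returns $\mathbf{G}^{\ast}=-\mathbf{G}$.

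There is essentially no obstacle here: the computation of $\mathbf{G}\cdot f$ is a one-line collapse of the sum, and the only point meriting care is the converse, where one must know that $\Xi^{\infty}$ contains enough honest functions to separate points of $\mathbb{R}\times\mathbb{T}$ — this is clear because the compactly supported smooth functions lie in $\Xi^{\infty}$ and each element of $\Xi^{\infty}$ is a genuine (smooth) function on $\mathbb{R}\times\mathbb{T}$, so pointwise evaluation is meaningful.
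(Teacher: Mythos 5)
Your proposal is correct and follows essentially the same route as the paper: collapse the sum in the left-action formula to get $(\mathbf{G}\cdot f)(x,y)=\overline{G}(x,y)f(x,y)$, then apply Lemma \ref{skewG} in both directions. The only difference is that you make explicit the point-separation step in the converse (choosing $f$ nonvanishing at a given point to cancel it), which the paper leaves implicit; this is a harmless and slightly more careful rendering of the same argument.
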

\begin{proof}
Let $\mathbf{G}$ be a multiplication-type element of $E^{c,\hslash}_{\mu\nu}$, i.e. $\mathbf{G}(x,y,p)=G(x,y)\delta_0(p)$, for $G\in C^{\infty}(\mathbb{T}^2)$. If $\mathbf{G}$ is skew-symmetric, then the corresponding function $G$ is skew symmetric by the previous lemma.
Thus $\overline{G}(x,y)=-G(x,y)$. So $({\mathbf{G}}\cdot f)(x,y)=\sum_q\overline{{\mathbf{G}}}(x,y,q)f(x+q,y)=\sum_q\overline{G}(x,y)\delta_0(q)f(x+q,y)=\overline{G}(x,y)f(x,y)=-G(x,y)f(x,y)$.
Now assume that a multiplication-type element $\mathbf{G}$ of $E^{c,\hslash}_{\mu\nu}$ acts on ${\Xi}^{\infty}$ by $(\mathbf{G}\cdot f)(x,y)$ =$-G(x,y)f(x,y)$ for $f \in {\Xi}^{\infty}$. Then $\sum_q\overline{\mathbf{G}}(x,y,q)f(x+q,y)$=$-G(x,y)f(x,y)$. This implies that $\sum_q\overline{G}(x,y)\delta_0(q)f(x+q,y)=\overline{G}(x,y)f(x,y)=-G(x,y)f(x,y)$. Thus $\overline{G}(x,y)=-G(x,y)$. Therefore, $\mathbf{G}^{\ast}=-\mathbf{G}$ by Lemma \ref{skewG}.
\end{proof}

With this notation and Lemma \ref{GrassCurv2}, we can view the Grassmannian curvature ${\Theta}^{0}_{\nabla}$ as a multiplication-type element of $E^{c,\hslash}_{\mu\nu}$ with the corresponding skew-symmetric functions $f_1$, $0$ and $f_2$. So we write ${\Theta}^{0}_{\nabla}$ as follows.
\begin{equation}\label{GrassSim}
{\Theta}^{0}_{\nabla}(X,Y)={\mathbf{f_1}}\;,\;\;{\Theta}^{0}_{\nabla}(X,Z)=0\;,\;\;{\Theta}^{0}_{\nabla}(Y,Z)={\mathbf{f_2}},
\end{equation}
where  $\mathbf{f_1}(x,y,p)=f_1(x)\delta_0(p)$, $\mathbf{f_2}(x,y,p)=f_2(x)\delta_0(p)$ and $\overline{f}_1(x)=-f_1(x)$, $\overline{f}_2(x)=-f_2(x)$, $f_1, f_2 \in C^{\infty}(\mathbb{T})$.

\begin{proposition}\label{Liebracket}Let $X$, $Y$, $Z$ be the basis of the Heisenberg Lie algebra $\mathfrak{g}$ with $[X,Y]=cZ$.
Let $\nabla^{0}$ be the Grassmannian connection on ${\Xi}^{\infty}$ given in (\ref{Gconnection}) and let ${\mathbf{G}}$ be a multiplication-type skew-symmetric element of $E^{c,\hslash}_{\mu\nu}$ as defined above, corresponding to the smooth skew-symmetric function $G \in C^{\infty}(\mathbb{T}^2)$. Then for $f \in {\Xi}^{\infty}$,
\[([\nabla_X^{0},{\mathbf{G}}]\cdot f)(x,y)=(\frac{\partial}{\partial y}G)(x,y)f(x,y),\]
\[([\nabla_Y^{0},{\mathbf{G}}]\cdot f)(x,y)=(\frac{\partial}{\partial x}G)(x,y)f(x,y),\]
\[([\nabla_Z^{0},{\mathbf{G}}]\cdot f)(x,y)=0.\]
\end{proposition}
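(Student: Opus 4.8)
The plan is to reduce the proposition to a short algebraic computation by first working out the Grassmannian connection $\nabla^{0}$ explicitly as an operator on $\Xi^{\infty}$; the point is that, for the specific function $R$ of Section 3, each $\nabla^{0}_{W}$ differs from a coordinate derivative only by a zeroth order multiplication operator. In Step~1 I would start from $\nabla^{0}_{W}(f)=R\cdot\delta_{W}(\langle R,f\rangle_{D})$, substitute $\langle R,f\rangle_{D}(x,y,p)=\sum_{k}\overline{e}(ckp(y-\hslash p\nu))R(x+k)\overline{f}(x-2\hslash p\mu+k,y-2\hslash p\nu)$ (using that $R$ is a real function of $x$ only), apply the explicit derivations — recall $\delta_{X}\Phi=2\pi i cp(x-\hslash p\mu)\Phi-\partial_{y}\Phi$, $\delta_{Y}\Phi=-\partial_{x}\Phi$ and $\delta_{Z}\Phi=2\pi i pc\,\Phi$ — and then apply the map $\Phi\mapsto R\cdot\Phi$. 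Here one uses the properties of the explicit $R$: the normalization $\sum_{k}|R(x-2\hslash k\mu)|^{2}=1$ of (\textbf{C}-2), which on differentiation also gives $\sum_{k}R(x-2\hslash k\mu)R'(x-2\hslash k\mu)=0$, and $R(x)R(x+j)=0$ for $j\in\mathbb{Z}\setminus\{0\}$ from (\textbf{C}-3), together with the consequence $R(x)R'(x+j)=0$ for $j\in\mathbb{Z}\setminus\{0\}$ (immediate since $\mathrm{supp}\,R'\subseteq\mathrm{supp}\,R$ and $\mathrm{supp}\,R$ has length $<1$). These identities collapse the sums, and a direct calculation gives, for $f\in\Xi^{\infty}$,
\[(\nabla^{0}_{X}f)(x,y)=-(\partial_{y}f)(x,y)+\omega_{1}(x)f(x,y),\qquad(\nabla^{0}_{Y}f)(x,y)=-(\partial_{x}f)(x,y),\]
\[(\nabla^{0}_{Z}f)(x,y)=\omega_{3}(x)f(x,y),\]
where $\omega_{1},\omega_{3}$ are explicit smooth functions of $x$ depending only on $R$ (for instance $\omega_{3}(x)=-2\pi i c\sum_{k}kR(x+2\hslash k\mu)^{2}$); all that matters below is that $\nabla^{0}_{X}+\partial_{y}$ and $\nabla^{0}_{Z}$ are multiplication operators while $\nabla^{0}_{Y}=-\partial_{x}$.

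For Step~2, recall from Proposition~\ref{MultiG} that $\mathbf{G}$ acts on $\Xi^{\infty}$ as the multiplication operator $f\mapsto -Gf$. Since $G$, $\omega_{1}$, $\omega_{3}$ are scalar functions, all the multiplication operators involved commute with one another, and $\partial_{y}(Gf)=(\partial_{y}G)f+G\,\partial_{y}f$, $\partial_{x}(Gf)=(\partial_{x}G)f+G\,\partial_{x}f$. Hence
\[[\nabla^{0}_{X},\mathbf{G}]\cdot f=\nabla^{0}_{X}(-Gf)-\mathbf{G}\cdot(\nabla^{0}_{X}f)=\bigl(\partial_{y}(Gf)-\omega_{1}Gf\bigr)-\bigl(G\,\partial_{y}f-G\omega_{1}f\bigr)=(\partial_{y}G)\,f,\]
\[[\nabla^{0}_{Y},\mathbf{G}]\cdot f=\nabla^{0}_{Y}(-Gf)-\mathbf{G}\cdot(\nabla^{0}_{Y}f)=\partial_{x}(Gf)-G\,\partial_{x}f=(\partial_{x}G)\,f,\]
\[[\nabla^{0}_{Z},\mathbf{G}]\cdot f=\nabla^{0}_{Z}(-Gf)-\mathbf{G}\cdot(\nabla^{0}_{Z}f)=-\omega_{3}Gf+G\omega_{3}f=0,\]
which are precisely the three asserted formulas.

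The only substantive work is Step~1, and the difficulty there is entirely bookkeeping: one must manage a triple sum — the summation index in $\langle R,f\rangle_{D}$, the index produced when $\delta_{W}$ differentiates the exponential $\overline{e}(ckp(y-\hslash p\nu))$, and the index coming from $\Phi\mapsto R\cdot\Phi$ — apply the Leibniz rule carefully, and verify that every cross-term is annihilated by (\textbf{C}-2) and (\textbf{C}-3), so that only a single summand survives. The one auxiliary fact not literally on the list is $R(x)R'(x+j)=0$ for nonzero integer $j$, needed to discard the terms in which $\partial_{x}$ falls on $R$ while computing $\nabla^{0}_{Y}$; it follows at once from the explicit support of $R$. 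Once these closed forms for $\nabla^{0}$ are in place, Step~2 is a one-line verification, essentially the statement that commuting a coordinate derivative past multiplication by $G$ produces the corresponding partial derivative of $G$.
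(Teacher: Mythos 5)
Your proposal is correct, and it is built on the same computational engine as the paper's proof: expand everything through the bimodule formulas for $\langle\cdot,\cdot\rangle_D$ and the right action, kill all cross-terms with (\textbf{C}-3) (plus its consequence for $R'$, which you rightly flag and justify from the support of $R$), and normalize with (\textbf{C}-2) and its differentiated form $\sum_k R(x-2\hslash k\mu)R'(x-2\hslash k\mu)=0$. The genuine difference is organizational: the paper computes each commutator $[\nabla^0_W,\mathbf{G}]\cdot f$ head-on, arriving at $\sum_q R^2(x+2\hslash q\mu)\,(\partial_y G)(x,y)f(x,y)$ and then invoking (\textbf{C}-2), and it never records a closed form for $\nabla^0_W$ itself; you instead first establish that $\nabla^0_X=-\partial_y+\omega_1(x)$, $\nabla^0_Y=-\partial_x$, $\nabla^0_Z=\omega_3(x)$ as operators on $\Xi^\infty$, after which all three commutator identities are one-line Leibniz computations. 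I verified your closed forms (including $\omega_3(x)=-2\pi ic\sum_k kR^2(x+2\hslash k\mu)$ and $\omega_1(x)=-2\pi ic\sum_k k(x+\hslash k\mu)R^2(x+2\hslash k\mu)$), and they are right. Your route buys more than the paper's: the operator form makes it transparent why $[\nabla^0_Z,\mathbf{G}]=0$ (two multiplication operators commute), why only $\partial_xG$ and $\partial_yG$ survive, and, as a bonus, why the Grassmannian curvature components of Lemma 3.6 are multiplication operators depending only on $x$. The cost is that the bookkeeping is front-loaded into one slightly heavier calculation of $\nabla^0_W$ rather than three lighter ones, but nothing is lost and no step fails.
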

\begin{proof}
Let $\mathbf{G}(x,y,p)=G(x,y)\delta_0(p)$ for a skew-symmetric function $G \in C^{\infty}(\mathbb{T}^2)$. Then for $f\in {\Xi}^{\infty}$,
\[[{\nabla}^0_X,\mathbf{G}](f)(x,y)= ({\nabla}^0_X\circ\mathbf{G})(f)(x,y)-(\mathbf{G}\circ{\nabla}^0_X)(f)(x,y)\]
\[={\nabla}^0_X(\mathbf{G}\cdot f)(x,y)-\mathbf{G}\cdot({\nabla}^0_X(f))(x,y)
=R\cdot\delta_X\langle R,\mathbf{G}\cdot f\rangle_D(x,y)-G(x,y)({\nabla}^0_X(f))(x,y)\]
\[=\sum_{q}R(x+2\hslash q\mu)\overline{\delta_X\langle R,\mathbf{G}\cdot f\rangle_D}(x+2\hslash q\mu, y+2\hbar q\nu,q)+G(x,y)(R\cdot\delta_X\langle R,f\rangle_D)(x,y).\]
The important equation that we use in this proof is given in (\textbf{C}-3), $R(x)R(x+j)=0$ if $j\ne 0$. By related formulas and equations, we obtain the following.
\[[{\nabla}^0_X,\mathbf{G}](f)(x,y)=\sum_{q}R^2(x+2\hslash q\mu)(\frac{\partial}{\partial y}G(x,y))f(x,y).\]
 By equation (\textbf{C}-2), it follows that $([\nabla_X^{0},{\mathbf{G}}]\cdot f)(x,y)=(\frac{\partial}{\partial y}G)(x,y)f(x,y)$.
Similarly, we can obtain the second equation and the third equation. We leave the details to the reader.
\end{proof}

The previous two propositions imply the following.
\begin{corollary}\label{GCR}  Let $X$,$Y$,$Z$ be the basis of the Heisenberg Lie algebra $\mathfrak{g}$ with $[X,Y]=cZ$. Let ${\nabla}^0$ be the Grassmannian connection given in (\ref{Gconnection}) and $\mathbf{G}$ be a multiplication-type skew-symmetry element of $E^{c,\hslash}_{\mu\nu}$. Then
\[[\nabla_X^{0},{\mathbf{G}}](x,y,p)=-\frac{\partial}{\partial y}\mathbf{G}(x,y,p),\;[\nabla_Y^{0},{\mathbf{G}}](x,y,p)=-\frac{\partial}{\partial x}\mathbf{G}(x,y,p),\;[\nabla_Z^{0},{\mathbf{G}}]=0.\]
\end{corollary}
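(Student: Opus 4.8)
The plan is to derive Corollary \ref{GCR} as an immediate consequence of Proposition \ref{Liebracket} together with Proposition \ref{MultiG}, by translating the statements about how the commutators act on $\Xi^{\infty}$ into statements about the corresponding multiplication-type elements of $E^{c,\hslash}_{\mu\nu}$. First I would observe that, since $\nabla^0$ is a connection and $\mathbf{G}$ is a (skew-symmetric) endomorphism of $\Xi^{\infty}$ sitting inside $E^{c,\hslash}_{\mu\nu}$, each commutator $[\nabla_X^0,\mathbf{G}]$, $[\nabla_Y^0,\mathbf{G}]$, $[\nabla_Z^0,\mathbf{G}]$ is again an element of $E = \mathrm{End}_{D^{c,\hslash}_{\mu\nu}}(\Xi^{\infty}) \cong E^{c,\hslash}_{\mu\nu}$; this is the standard fact quoted in Section 3 that the difference of connections, and more generally the commutator of a connection with an endomorphism, lands in $E$. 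So it suffices to identify which element of $E^{c,\hslash}_{\mu\nu}$ each commutator is, and for that it is enough to know how it acts on $\Xi^{\infty}$, because the left action of $E^{c,\hslash}_{\mu\nu}$ on $\Xi$ is faithful (the bimodule $\Xi$ implements a Morita equivalence).

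Next I would invoke Proposition \ref{Liebracket}: for $f\in\Xi^{\infty}$ we have $([\nabla_X^0,\mathbf{G}]\cdot f)(x,y) = (\tfrac{\partial}{\partial y}G)(x,y)f(x,y)$, and similarly with $\tfrac{\partial}{\partial x}G$ for $Y$, and $0$ for $Z$. Now $\tfrac{\partial}{\partial y}G$ is itself a smooth skew-symmetric function on $\mathbb{T}^2$ — skew-symmetry because $\overline{G} = -G$ forces $\overline{\partial_y G} = \partial_y \overline G = -\partial_y G$, and it descends to $\mathbb{T}^2$ because $G$ does — so it determines a multiplication-type skew-symmetric element of $E^{c,\hslash}_{\mu\nu}$, namely the element whose value at $(x,y,p)$ is $(\partial_y G)(x,y)\delta_0(p)$; call it temporarily $\mathbf{H}$. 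By Proposition \ref{MultiG}, this $\mathbf{H}$ acts on $\Xi^{\infty}$ by $(\mathbf{H}\cdot f)(x,y) = -(\partial_y G)(x,y)f(x,y)$. Comparing, $[\nabla_X^0,\mathbf{G}]$ and $-\mathbf{H}$ act identically on $\Xi^{\infty}$, hence by faithfulness $[\nabla_X^0,\mathbf{G}] = -\mathbf{H}$ as elements of $E^{c,\hslash}_{\mu\nu}$, i.e. $[\nabla_X^0,\mathbf{G}](x,y,p) = -(\partial_y G)(x,y)\delta_0(p) = -\tfrac{\partial}{\partial y}\mathbf{G}(x,y,p)$, where the last equality is just the definition of the multiplication-type element $\mathbf{G}(x,y,p) = G(x,y)\delta_0(p)$ differentiated in $y$. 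The same argument verbatim, using $\partial_x G$ in place of $\partial_y G$, gives $[\nabla_Y^0,\mathbf{G}](x,y,p) = -\tfrac{\partial}{\partial x}\mathbf{G}(x,y,p)$, and the $Z$-case is even simpler since Proposition \ref{Liebracket} already gives that $[\nabla_Z^0,\mathbf{G}]$ annihilates $\Xi^{\infty}$, whence $[\nabla_Z^0,\mathbf{G}]=0$ in $E^{c,\hslash}_{\mu\nu}$ by faithfulness.

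I do not anticipate a genuine obstacle here — the corollary really is a packaging of the two preceding propositions. The one point that deserves a sentence of care is the passage from "acts the same way on $\Xi^{\infty}$" to "equal as elements of $E^{c,\hslash}_{\mu\nu}$": this uses that the representation of $E^{c,\hslash}_{\mu\nu}$ on $\Xi$ (equivalently $\Xi^{\infty}$) is faithful, which follows from $\Xi$ being an imprimitivity bimodule implementing the Morita equivalence of $E^{c,\hslash}_{\mu\nu}$ and $D^{c,\hslash}_{\mu\nu}$ recalled in Section 1. A second minor point worth noting explicitly is the sign bookkeeping: Proposition \ref{MultiG} builds a sign into the action of a multiplication-type element ($\mathbf{G}\cdot f = -Gf$), so the element of $E^{c,\hslash}_{\mu\nu}$ realizing the operator $f\mapsto (\partial_y G)f$ is the multiplication-type element with symbol $-\partial_y G$, which is exactly why the stated identity carries the minus sign $[\nabla_X^0,\mathbf{G}] = -\partial_y\mathbf{G}$ rather than $+\partial_y\mathbf{G}$. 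Beyond these two remarks the proof is a two-line deduction, so I would keep the write-up correspondingly short, perhaps even deferring the routine sign-chase to the reader as the author has done with the analogous computations above.
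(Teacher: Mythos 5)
Your proposal is correct and follows exactly the route the paper takes: the paper's entire proof is the one-line remark that the corollary is immediate from Proposition \ref{Liebracket} and Proposition \ref{MultiG}, and your write-up simply makes explicit the two points (faithfulness of the left action of $E^{c,\hslash}_{\mu\nu}$ on $\Xi$, and the sign introduced by the convention $\mathbf{G}\cdot f=-Gf$) that the paper leaves tacit. No discrepancy to report.
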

\begin{proof}
It is obvious by Proposition \ref{Liebracket} and Proposition \ref{MultiG}.
\end{proof}

For notational simplicity we write the equations in Corollary \ref{GCR} as follows.
\[[\nabla_X^{0},{\mathbf{G}}]=-\frac{\partial}{\partial y}\mathbf{G}\;\;,\;\; [\nabla_Y^{0},{\mathbf{G}}]=-\frac{\partial}{\partial x}\mathbf{G}\;\;,\;\; [\nabla_Z^{0},{\mathbf{G}}]=0.\]

\begin{proposition}\label{Lie algebra by GC}
The Grassmannian connection $\nabla^{0}$ given in (\ref{Gconnection}) generates an infinite dimensional Lie algebra, in the sense that the Lie algebra of operators generated by ${\nabla}^0_X$,${\nabla}^0_Y$,${\nabla}^0_Z$ is infinite dimensional.
\end{proposition}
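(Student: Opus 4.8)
The plan is to locate, inside the Lie algebra $\mathfrak{L}$ of operators generated by ${\nabla}^0_X,{\nabla}^0_Y,{\nabla}^0_Z$, an infinite linearly independent family: the multiplication operators attached to the successive $x$-derivatives of the curvature function $f_1$ produced in Lemma~\ref{GrassCurv2}. The whole argument then reduces to showing that $f_1$ is not a trigonometric polynomial.

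First I would write down the relevant brackets. Since $\{X,Y,Z\}$ is the Heisenberg basis with $[X,Y]=cZ$ and $Z$ central, and since a connection (hence ${\nabla}^0$ via (\ref{Gconnection})) is linear in its subscript, the definition of curvature together with (\ref{GrassSim}) gives
\[
[{\nabla}^0_X,{\nabla}^0_Y]={\nabla}^0_{[X,Y]}+{\Theta}^0_{\nabla}(X,Y)=c\,{\nabla}^0_Z+\mathbf{f_1},\qquad [{\nabla}^0_X,{\nabla}^0_Z]=0,\qquad [{\nabla}^0_Y,{\nabla}^0_Z]=\mathbf{f_2}.
\]
In particular $\mathbf{f_1}=[{\nabla}^0_X,{\nabla}^0_Y]-c\,{\nabla}^0_Z\in\mathfrak{L}$, and by Lemma~\ref{GrassCurv2} it is the multiplication-type skew-symmetric element of $E^{c,\hslash}_{\mu\nu}$ corresponding to the nonzero skew-symmetric function $f_1\in C^{\infty}(\mathbb{T})$, viewed as a function on $\mathbb{T}^2$ that is independent of $y$.

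Next I would iterate $\mathrm{ad}_{{\nabla}^0_Y}$ on $\mathbf{f_1}$. For $k\ge 0$ set $\mathbf{f_1^{(k)}}(x,y,p)=f_1^{(k)}(x)\delta_0(p)$; since $f_1$ is skew-symmetric and $2\hslash\mu$-periodic, each derivative $f_1^{(k)}$ is again skew-symmetric and $2\hslash\mu$-periodic, so $\mathbf{f_1^{(k)}}$ is a multiplication-type skew-symmetric element of $E^{c,\hslash}_{\mu\nu}$. Corollary~\ref{GCR} then applies and gives $[{\nabla}^0_Y,\mathbf{f_1^{(k)}}]=-\tfrac{\partial}{\partial x}\mathbf{f_1^{(k)}}=-\mathbf{f_1^{(k+1)}}$, so by induction $\mathbf{f_1^{(k)}}=(-1)^k\,\mathrm{ad}_{{\nabla}^0_Y}^{\,k}(\mathbf{f_1})\in\mathfrak{L}$ for every $k\ge 0$. (Bracketing with ${\nabla}^0_X$ or ${\nabla}^0_Z$ would only produce zero here, since $f_1$ depends on $x$ alone and ${\nabla}^0_Z$ commutes with multiplication-type elements by Corollary~\ref{GCR}.)

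Finally I would prove that $\{\mathbf{f_1^{(k)}}:k\ge 0\}$ is linearly independent, which forces $\dim\mathfrak{L}=\infty$. By Proposition~\ref{MultiG}, distinct functions give distinct multiplication operators on ${\Xi}^{\infty}$, so it suffices to show $\{f_1^{(k)}:k\ge 0\}$ is linearly independent in $C^{\infty}(\mathbb{T})$. If a nontrivial relation $\sum_{k=0}^{N}c_k f_1^{(k)}=0$ held, then $f_1$ would satisfy a nonzero constant-coefficient linear ODE, hence be real-analytic on $\mathbb{R}$; but $f_1$ vanishes on a nonempty open interval, because $R\equiv 1$ (so $R'\equiv 0$) on an interval, and in the closed-form expression for ${\Theta}^0_{\nabla}(X,Y)$ obtained in the proof of Lemma~\ref{GrassCurv2} every summand carries a factor $R'(x+2m\hslash\mu)$ for some $m\in\mathbb{Z}$, while $R'$ is supported only in the transition zones of $R$; so for $x$ in the union of those zone-shifts $f_1$ is forced to vanish, and on $\mathbb{T}=\mathbb{R}/2\hslash\mu\mathbb{Z}$ this leaves $f_1\equiv 0$ on a nonempty open set. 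A real-analytic function on the connected space $\mathbb{R}$ vanishing on a nonempty open set vanishes identically, contradicting $f_1\ne 0$ from Lemma~\ref{GrassCurv2}. Hence $\mathfrak{L}\supseteq\mathrm{span}\{\mathbf{f_1^{(k)}}:k\ge 0\}$ is infinite dimensional. The step I expect to be the main obstacle is precisely this last one: establishing that $f_1$ is not a trigonometric polynomial, which requires extracting the explicit shape of $f_1$ from the proof of Lemma~\ref{GrassCurv2} and tracking the support of $R'$. (An alternative route for the independence is to expand $f_1$ in Fourier series on $\mathbb{R}/2\hslash\mu\mathbb{Z}$: since $f_1$ has infinitely many nonzero modes, a Vandermonde argument on any infinite set of those modes shows $f_1^{(0)},\dots,f_1^{(m)}$ are independent for every $m$.)
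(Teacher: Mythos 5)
Your proof is correct and follows the same basic route as the paper's: compute the brackets $[\nabla^0_X,\nabla^0_Y]=c\nabla^0_Z+\mathbf{f_1}$, $[\nabla^0_Y,\nabla^0_Z]=\mathbf{f_2}$, $[\nabla^0_X,\nabla^0_Z]=0$, extract the multiplication-type element $\mathbf{f_1}$, and iterate $\mathrm{ad}_{\nabla^0_Y}$ using Corollary \ref{GCR} to produce the successive $x$-derivatives inside the generated Lie algebra. Where you go beyond the paper is the last step. The paper's proof stops at the observation that $-\frac{\partial}{\partial x}\mathbf{f_1}\ne 0$ and $-\frac{\partial}{\partial x}\mathbf{f_2}\ne 0$ ``in general'' and concludes infinite dimensionality from there; strictly speaking this is not enough, since the span of $\{\mathbf{f_1^{(k)}}\}_{k\ge 0}$ could a priori be finite dimensional (exactly when $f_1$ satisfies a constant-coefficient linear ODE, i.e.\ is an exponential polynomial). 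Your argument closes that gap: a nontrivial dependence would force $f_1$ to be real-analytic, while the closed-form expression for $\Theta^0_{\nabla}(X,Y)$ in the proof of Lemma \ref{GrassCurv2} shows every summand carries a factor of $R'$ evaluated at a $2\hslash\mu\mathbb{Z}$-translate of $x$, so $f_1$ vanishes on the open set (mod $2\hslash\mu$) where $R$ is locally constant, and a nonzero analytic function cannot do that. This is the right way to make the paper's assertion rigorous, and your identification of this as the main obstacle is accurate; the only caveat is that it leans on the explicit form of $R$ and of the curvature computed in Lemma \ref{GrassCurv2}, which is exactly the information available and intended to be used here.
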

\begin{proof}
Using the formula for the curvature and the notation for the Grassmannian curvature in (\ref{GrassSim}), we have the following.
\[[\nabla_X^{0},\nabla_Y^{0}]={\Theta}_{\nabla}^{0}(X,Y)+\nabla_Z^{0}={\mathbf{f_1}}+\nabla_Z^{0},\]
\[[\nabla_X^{0},\nabla_Z^{0}]={\Theta}_{\nabla}^{0}(X,Z)=0 \quad \text{and}\quad
[\nabla_Y^{0},\nabla_Z^{0}]={\Theta}_{\nabla}^{0}(Y,Z)={\mathbf{f_2}}.\]
Also, by the previous corollary we have
\[[\nabla_X^{0},{\mathbf{f_1}}]=0,\;\;[\nabla_Y^{0},{\mathbf{f_1}}]=-\frac{\partial}{\partial x}\mathbf{f_1},\;\;[\nabla_Z^{0},{\mathbf{f_1}}]=0,\]
\[[\nabla_X^{0},{\mathbf{f_2}}]=0,\;\;[\nabla_Y^{0},{\mathbf{f_2}}]=-\frac{\partial}{\partial x}\mathbf{f_2},\;\;[\nabla_Z^{0},{\mathbf{f_2}}]=0.\]
The proof of Lemma \ref{GrassCurv2} shows that $-\frac{\partial}{\partial x}\mathbf{f_1}\ne 0$ and $-\frac{\partial}{\partial x}\mathbf{f_2}\ne 0$, in general.
Thus $\nabla^0_X, \nabla^0_Y, \nabla^0_Z $ generate an infinite dimensional Lie algebra.
\end{proof}

\section{The Yang-Mills functional on quantum Heisenberg manifolds}

The Yang-Mills problem is mainly about determining the nature of the set of the critical points for the Yang-Mills functional $YM$. In particular, the critical points where $YM$ attains its minimum. According to differential calculus, $\nabla$ is a critical point of $YM$ if $D(YM(\nabla))=0$, i.e the derivative of $YM$ at $\nabla$ is zero. Also we have
\begin{equation*}
\frac{d}{dt}\Bigl\vert_{t=0}YM(\nabla+t\mu)=D(YM(\nabla))\cdot\mu,
\end{equation*}
where $D$ is the derivative of $YM$. So $\nabla$ is a critical point of $YM$ if we have, for all linear maps $\mu : \mathfrak{g} \to E^s$,
\[\frac{d}{dt}\Bigl\vert_{t=0}YM(\nabla+t\mu)=0.\]

Thus, as given in \cite{Rief2}, for given Lie algebra $\mathfrak{g}$, $\nabla$ is a critical point of $YM$ if for all $Z_i\in\mathfrak{g}$,
\begin{equation}\label{CPC}
\sum_{j}[\nabla_{Z_i},\Theta_{\nabla}(Z_i\wedge Z_j)]-\sum_{j<k}c^i_{jk}\Theta_{\nabla}(Z_j\wedge Z_k)=0,
\end{equation}
where $c^{i}_{jk}$ are structure constants of $\mathfrak{g}$.

Recall that our Lie algebra ${\mathfrak g}$ is the Heisenberg Lie algebra with three basis
elements $\{X,Y,Z\}$ satisfying $[X,Y]=cZ$ for a positive integer $c$. This together with (\ref{CPC}) gives the following.
The connection $\nabla$ will be a critical point if
\begin{equation}\label{EQ1}
[\nabla_Y,\Theta_{\nabla}(X,Y)]+[\nabla_Z,\Theta_{\nabla}(X,Z)]=0,
\end{equation}
\begin{equation}\label{EQ2}
[\nabla_X,\Theta_{\nabla}(Y,X)]+[\nabla_Z,\Theta_{\nabla}(Y,Z)]=0,
\end{equation}
\begin{equation}\label{EQ3}
[\nabla_X,\Theta_{\nabla}(Z,X)]+[\nabla_Y,\Theta_{\nabla}(Z,Y)]-c\cdot\Theta_{\nabla}(X,Y)=0.
\end{equation}
It is now easy to see that the Grassmannian connection ${\nabla}^{0}$ given in (\ref{Gconnection}) is
not a critical point of $YM$. But we know that any other compatible connection can be obtained from the Grassmannian connection ${\nabla}^0$ by adding a linear map $\mu$ from $\mathfrak{g}$ into $(E^{c,\hslash}_{\mu\nu})^s$, a set of skew-symmetric element of $E^{c,\hslash}_{\mu\nu}$.
It is not hard to find a concrete example of a smooth skew-symmetric element of $E^{c,\hslash}_{\mu\nu}$, but it is difficult to compute the left hand sides of (\ref{EQ1}), (\ref{EQ2}) and (\ref{EQ3}) in general.  So we will try the simplest form of such a linear map whose range is skew-symmetric. In particular, we will use a linear map $\mu : \mathfrak{g}\rightarrow (E^{c,\hslash}_{\mu\nu})^s$ whose range lies in the set of multiplication-type element of $E^{c,\hslash}_{\mu\nu}$ introduced in the previous section.

For given Grassmannian connection ${\nabla}^0$ with the curvature ${\Theta}^0_{\nabla}$ as before, let $\nabla={\nabla}^{0}+\mu$, where ${\mu}_{X}^{\ast}=-{\mu}_X \in (E^{c,\hslash}_{\mu\nu})^{\infty}$ for $X \in {\mathfrak g}$. Then the corresponding curvature ${\Theta}_{\nabla}$ of $\nabla$ is the following. For $X,Y,Z \in {\mathfrak g}$ with $[X,Y]=cZ$,
\begin{equation}\label{curv1}
{\Theta}_{\nabla}(X,Y)={\Theta}_{\nabla}^{0}(X,Y)+[\nabla_X^{0},\mu_Y]-[\nabla_Y^{0},\mu_X]+[\mu_X,\mu_Y]-\mu_{[X,Y]},
\end{equation}
\begin{equation}\label{curv2}
{\Theta}_{\nabla}(X,Z)={\Theta}_{\nabla}^{0}(X,Z)+[\nabla_X^{0},\mu_Z]-[\nabla_Z^{0},\mu_X]+[\mu_X,\mu_Z],
\end{equation}
\begin{equation}\label{curv3}
{\Theta}_{\nabla}(Y,Z)={\Theta}_{\nabla}^{0}(Y,Z)+[\nabla_Y^{0},\mu_Z]-[\nabla_Z^{0},\mu_Y]+[\mu_Y,\mu_Z].
\end{equation}
Now consider the case where $\mu_X$ is a multiplication-type, skew-symmetric element. Let $\mu_X={\mathbf G_X}$, $\mu_Y={\mathbf G_Y}$ and $\mu_Z={\mathbf G_Z}$ for ${\mathbf G_X}\in E^{c,\hslash}_{\mu\nu}$ for $X\in\mathfrak{g}$ with the corresponding $G_i \in C^{\infty}(\mathbb{T}^2)$ such that $\overline{G}_i(x,y)=-G_i(x,y)$ for $i=1,2,3$.
 Using the formulas in the proof of Proposition \ref{Lie algebra by GC}, we can write the curvature $\Theta_{\nabla}$ in terms of the Grassmannian curvature ${\Theta}^0_{\nabla}$ and the multiplication-type elements $\mathbf{G}_i$. Then the proposed conditions for obtaining critical points, (\ref{EQ1}), (\ref{EQ2}) and (\ref{EQ3}) give the following two equations.
\begin{equation}\label{G3-1}
c\cdot G_3(x,y)=\displaystyle{-\frac{\partial}{\partial y}G_2(x,y)+\frac{\partial}{\partial x}G_1(x,y)+f_1(x)+c_1\cdot i},\;\; \text{for some real number $c_1$}.
\end{equation}
\begin{equation}\label{Elliptic}
\displaystyle{\frac{\partial^2}{\partial y^2}G_3(x,y)+\frac{\partial^2}{\partial x^2}G_3(x,y)=\frac{\partial}{\partial x}f_2(x)+c\cdot c_1\cdot i}.
\end{equation}

Let $w(x)=\frac{\partial}{\partial x}f_2(x)+c\cdot c_1\cdot i$. Then it is obvious that $\overline{w}(x)=-w(x)$ since $f_2$ is skew-symmetric.
To solve the elliptic equation (\ref{Elliptic}), take the Fourier transform on both sides in (\ref{Elliptic}). Then
\begin{equation*}
(-m^2-n^2)\widehat{G}_3(n,m)=\widehat{w}(n)\quad \text{so}\quad
\widehat{G_3}(n,m)=-\frac{\widehat{w}(n)}{(m^2+n^2)}\;,
\end{equation*} for $(m,n)\ne (0,0)$. Therefore
\begin{equation*}\label{Solution G-3}
G_3(x,y)=\sum_{n,m \in {\mathbb
Z},(n,m)\ne (0,0)}\widehat{G_3}(n,m) e(nx)e(my),
\end{equation*}
where $e(x)=exp\,(2\pi i\mu\hslash x)$ and $e(y)=exp\,(2\pi i\nu\hslash y)$. Notice that $G_3$ is periodic and $\widehat{\overline{w}}(n)=\overline{\widehat{w}}(-n)$, so $G_3$ is a skew-symmetric element of $C^{\infty}(\mathbb{T}^2)$, i.e. $\overline{G}_3(x,y)=-G_3(x,y)$. With the solution $G_3$ as above, we can show that the equation (\ref{G3-1}) allows to choose non-trivial $G_1$ and $G_2$.

 We finally state the main theorem.
 \begin{theorem}
Let $\mathfrak{g}$ be the Heisenberg Lie algebra with the basis $X,Y,Z$ satisfying $[X,Y]=cZ$ for a positive integer $c$. Let $\Xi$ be the $E^{c,\hslash}_{\mu\nu}$-$D^{c,\hslash}_{\mu\nu}$ bimodule given as before.
Let ${\nabla}^0$ be the Grassmannian connection on ${\Xi}^{\infty}$ produced by a special function $R$ with the Grassmannian curvature ${\Theta}^0_{\nabla}$ such that
${\Theta}^0_{\nabla}(X,Y)=\mathbf{f_1}$, ${\Theta}^0_{\nabla}(X,Z)=0$ and ${\Theta}^0_{\nabla}(Y,Z)=\mathbf{f_2}$, where $\mathbf{f_1}(x,y,p)=f_1(x)\delta_0(p)$, $\mathbf{f_2}(x,y,p)=f_2(x)\delta_0(p)$ for smooth periodic functions $f_1$ and $f_2$.
Let $\mathbf{G}$ be a linear map on $\mathfrak{g}$ whose range lies in the set of multiplication-type, skew-symmetric elements of $End_{D^{c,\hslash}_{\mu\nu}}(\Xi)=E^{c,\hslash}_{\mu\nu}$. Then $\nabla={\nabla}^0 + {\mathbf G}$ is a critical point of the Yang-Mills functional if and only if the corresponding skew-symmetric periodic functions $G_1$, $G_2$ and $G_3$ satisfy the following equations.
\begin{equation}\label{main1}
 \frac{\partial}{\partial x}G_1(x,y)-\frac{\partial}{\partial y}G_2(x,y)=c\cdot G_3(x,y)-\widetilde{f_1}(x),
 \end{equation}
\begin{equation}\label{main2}
\frac{\partial^2}{\partial y^2}G_3(x,y)+\frac{\partial^2}{\partial x^2}G_3(x,y)=\frac{\partial}{\partial x}f_2(x)+c\cdot a_0,
\end{equation}
where $\mathbf{G_X}(x,y,p)=G_1(x,y)\delta_0(p)$, $\mathbf{G_Y}(x,y,p)=G_2(x,y)\delta_0(p)$, $\mathbf{G_Z}(x,y,p)=G_3(x,y)\delta_0(p)$, and  $\widetilde{f_1}(x)=f_1(x)-a_0$ and $a_0=\int_{\mathbb{T}}f_1(x)\;dx$.
\end{theorem}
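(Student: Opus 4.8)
The plan is to convert Rieffel's criticality criterion (\ref{CPC}) --- in the explicit form (\ref{EQ1})--(\ref{EQ3}) valid for the Heisenberg Lie algebra --- into a system of differential equations for $G_1,G_2,G_3\in C^{\infty}(\mathbb{T}^2)$. Everything hinges on a closure phenomenon: as soon as every element in sight is a multiplication-type element of $E^{c,\hslash}_{\mu\nu}$, Corollary \ref{GCR} computes every commutator $[\nabla^0_\bullet,\,\cdot\,]$ in closed form, and commutators among multiplication-type elements vanish because they act on $\Xi$ as ordinary multiplication operators (Proposition \ref{MultiG}). So the proof is a disciplined substitution followed by an integrability bookkeeping.

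First I would compute the curvature of $\nabla=\nabla^0+\mathbf{G}$. Substituting $\mu_X=\mathbf{G_X}$, $\mu_Y=\mathbf{G_Y}$, $\mu_Z=\mathbf{G_Z}$ and $\mu_{[X,Y]}=\mu_{cZ}=c\,\mathbf{G_Z}$ into (\ref{curv1})--(\ref{curv3}), and then using Lemma \ref{GrassCurv2} for $\Theta^0_\nabla$, Corollary \ref{GCR} to evaluate $[\nabla^0_X,\mathbf{G_Y}]$, $[\nabla^0_Y,\mathbf{G_X}]$, $[\nabla^0_X,\mathbf{G_Z}]$, $[\nabla^0_Y,\mathbf{G_Z}]$, $[\nabla^0_Z,\mathbf{G_X}]=[\nabla^0_Z,\mathbf{G_Y}]=0$, and the vanishing of $[\mathbf{G_i},\mathbf{G_j}]$, one obtains that $\Theta_\nabla(X,Y)$, $\Theta_\nabla(X,Z)$, $\Theta_\nabla(Y,Z)$ are themselves multiplication-type, skew-symmetric elements, with associated functions (signs to be tracked carefully) $f_1+\partial_x G_1-\partial_y G_2-cG_3$, $-\partial_y G_3$, and $f_2-\partial_x G_3$. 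The one fact worth isolating at this stage is precisely that $\Theta_\nabla$ stays multiplication-type and skew-symmetric, since that is what permits Corollary \ref{GCR} to be invoked a second time.

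Next I would plug these curvature components into (\ref{EQ1})--(\ref{EQ3}). Every bracket $[\nabla_\bullet,\Theta_\nabla(\cdot,\cdot)]$ appearing there is again evaluated by Corollary \ref{GCR} (the $\mathbf{G}$-part of $\nabla_\bullet$ drops out, once more by vanishing of commutators of multiplication-type elements), so each equation becomes an identity of multiplication-type elements, which --- since these act faithfully on $\Xi$ --- I may read off as an identity of functions on $\mathbb{T}^2$. Equations (\ref{EQ1}) and (\ref{EQ2}) then say that $P:=f_1+\partial_x G_1-\partial_y G_2-cG_3$ has vanishing $x$- and $y$-derivatives, hence is a constant (necessarily purely imaginary, $P$ being skew-symmetric), while (\ref{EQ3}), through its structure-constant term $c\,\Theta_\nabla(X,Y)$, becomes a Laplace-type equation for $G_3$ with right-hand side $\partial_x f_2$ plus a multiple of $P$. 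Integrating over $\mathbb{T}^2$ kills all the derivative terms, which (i) expresses the constant $P$ in terms of $a_0=\int_{\mathbb{T}}f_1$ and the mean of $G_3$ and (ii) forces the Fredholm compatibility condition that pins the a priori free constant; re-expressing everything with $\widetilde{f_1}=f_1-a_0$ absorbed produces exactly (\ref{main1}) and (\ref{main2}). The converse is the same chain run backwards: if $G_1,G_2,G_3$ satisfy (\ref{main1}) and (\ref{main2}), then (\ref{EQ1})--(\ref{EQ3}) hold for $\nabla=\nabla^0+\mathbf{G}$, so $\nabla$ is a critical point of $YM$.

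The main obstacle is not any single estimate but the cumulative bookkeeping: keeping straight that $\mathbf{G}$ acts as multiplication by $-G$, that $[\nabla^0_X,\mathbf{G}]=-\partial_y\mathbf{G}$ whereas $[\nabla^0_Y,\mathbf{G}]=-\partial_x\mathbf{G}$ and $[\nabla^0_Z,\mathbf{G}]=0$, that all three curvature components and then all three left-hand sides of (\ref{EQ1})--(\ref{EQ3}) collapse to multiplication-type elements, and that the Lie bracket $[X,Y]=cZ$ is the only place the structure constant $c$ enters; plus the small but essential step of extracting the integrability condition on $\mathbb{T}^2$ that turns the undetermined additive constant into $a_0$. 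The residual PDE --- solving $\partial_x^2 G_3+\partial_y^2 G_3=\partial_x f_2+c\,a_0$ on $\mathbb{T}^2$ by Fourier series --- is entirely routine and is in effect already carried out in the paragraph preceding the theorem.
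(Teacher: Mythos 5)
Your proposal is correct and follows essentially the same route as the paper: the paper's proof of the theorem is literally the argument preceding it (substituting the multiplication-type $\mathbf{G_X},\mathbf{G_Y},\mathbf{G_Z}$ into (\ref{curv1})--(\ref{curv3}), evaluating all commutators via Corollary \ref{GCR} and the vanishing of commutators of multiplication-type elements, and reducing (\ref{EQ1})--(\ref{EQ3}) to (\ref{G3-1}) and (\ref{Elliptic})), which is exactly the substitution-and-bookkeeping you describe. If anything, you supply more detail than the paper does on how the undetermined constant is pinned down by integrating over $\mathbb{T}^2$.
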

\begin{proof}
The proof follows from the previous argument.
\end{proof}
\begin{remark}
We note that these critical points are not minima, but inflection points.
\end{remark}

\section{Laplace's equation on quantum Heisenberg manifolds}
 In \cite{Ros}, J. Rosenberg described the Euler-Lagrangian equation for critical points of the energy functional on the non-commutative torus as Laplace's equation $\Delta a=0$ for a self adjoint element $a$ of the non-commutative torus. In this section, we show that those two equations (\ref{main1}) and (\ref{main2}) in the main theorem of this paper can be expressed in terms of this Laplacian and derivations on quantum Heisenberg manifolds, by using a result of Morita equivalence for quantum Heisenberg manifolds.
First we give the explicit formula of the Laplacian first defined by N. Weaver in \cite{W} as follows.
 For $\Phi \in (D^{c,\hslash}_{\mu\nu})^{\infty}$,
 \[\Delta(\Phi)(x,y,p)=(\delta_X)^2(\Phi)(x,y,p)+(\delta_Y)^2(\Phi)(x,y,p) =-4\pi ip^2(x-\hslash p\mu)^2\Phi(x,y,p)\]
 \[-2\pi ip(x-\hslash p\mu)\frac{\partial\Phi}{\partial y}(x,y,p)-2\pi ip(x-\hslash p\mu)\frac{\partial\Phi}{\partial y}(x,y,p)+\frac{\partial^2\Phi}{\partial y^2}(x,y,p)+\frac{\partial^2\Phi}{\partial x^2}(x,y,p).\]

 When $p=0$, then the above Laplacian becomes the following :
 \[\Delta(\Phi)(x,y,0)=\frac{\partial^2\Phi}{\partial y^2}(x,y,0)+\frac{\partial^2\Phi}{\partial x^2}(x,y,0),\]
for $\Phi \in (D^{c,\hslash}_{\mu\nu})^{\infty}$.
Thus, if we restrict the Laplacian on a smooth element $\mathbf{H}$ of $D^{c,\hslash}_{\mu\nu}$ that is only supported at $p=0$, i.e. $\mathbf{H}(x,y,p)=H(x,y)\delta_0(p)$, $H\in C^{\infty}(\mathbb{T}^2)$, then we have
\[\Delta(\mathbf{H})(x,y,p)=\frac{\partial^2\mathbf{H}}{\partial y^2}(x,y,p)+\frac{\partial^2\mathbf{H}}{\partial x^2}(x,y,p).\]

 As shown in Section 2, the generalized fixed point $C^{\ast}$-algebra $E^{c,\hslash}_{\mu\nu}$ is isomorphic to a quantum Heisenberg manifold with different parameters, in particular $D^{c,\hslash}_{\frac{1}{4\mu}\frac{\nu}{2\mu}}$, so we can treat an $E^{c,\hslash}_{\mu\nu}$-valued multiplication operator $\mathbf{G}$ as a $D^{c,\hslash}_{\frac{1}{4\mu}\frac{\nu}{2\mu}}$-valued multiplication operator in the main theorem.
Also, the Grassmannian curvature ${\Theta}^0_{\nabla}$ that is an $(E^{c,\hslash}_{\mu\nu})^{\infty}$-valued 2-form now can be considered as a $(D^{c,\hslash}_{\frac{1}{4\mu}\frac{\nu}{2\mu}})^{\infty}$-valued 2-form. Thus we obtain the following corollary.

 \begin{corollary} Let $\mathfrak{g}$ and $\Xi$ be as before. Let $\delta$ be the infinitesimal form of the Heisenberg group action given as before and $\Delta={\delta}^2_X+{\delta}^2_Y$ for $X,Y\in \mathfrak{g}$.
 Let ${\nabla}^0$ be the Grassmannian connection on ${\Xi}^{\infty}$ produced by the special function $R$ given in (\ref{Gconnection}). Let ${\Theta}^0_{\nabla}$ be the corresponding Grassmannian curvature such that ${\Theta}^0_{\nabla}(X,Y)$ and ${\Theta}^0_{\nabla}(Y,Z)$ are non-trivial multiplication-type elements of $E^{c,\hslash}_{\mu\nu}$, and ${\Theta}^0_{\nabla}(X,Z)=0$.  Let $\mathbf{G}$ be a linear map on $\mathfrak{g}$ whose range lies in the set of multiplication-type skew-symmetric elements of $End_{D^{c,\hslash}_{\mu\nu}}(\Xi)=E^{c,\hslash}_{\mu\nu}$.
 Then, $\nabla={\nabla}^0 + {\mathbf G}$ is a critical point of Yang-Mills functional if and only if $\mathbf{G_X}$, $\mathbf{G_Y}$ and $\mathbf{G_Z}$ satisfy the following equations.
 \begin{equation*}
 \delta_X(\mathbf{G_Y})-\delta_Y(\mathbf{G_X})=c\cdot\mathbf{G_Z}-{\Theta}^0_{\nabla}(X,Y)+a_0,
 \end{equation*}
 \begin{equation*}
 \Delta(\mathbf{G_Z})=-\delta_Y({\Theta}^0_{\nabla}(Y,Z))+c\cdot a_0,
 \end{equation*}
 where $a_0=\int_{\mathbf{T}}f_1(x)dx$, and ${\Theta}^0_{\nabla}(X,Y)(x,y,p)=f_1(x)\delta_0(p)$ for a smooth periodic function $f_1(x)$.
 \end{corollary}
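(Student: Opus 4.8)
The plan is to show that the corollary is simply a translation of the main Theorem (equations (\ref{main1}) and (\ref{main2})) into the language of the derivations $\delta_X,\delta_Y,\delta_Z$ and the Laplacian $\Delta=\delta_X^2+\delta_Y^2$, exploiting the Morita equivalence $E^{c,\hslash}_{\mu\nu}\cong D^{c,\hslash}_{\frac{1}{4\mu},\frac{\nu}{2\mu}}$ established in Section 2. The starting point is the observation, recorded just before the corollary, that every multiplication-type element $\mathbf{G}(x,y,p)=G(x,y)\delta_0(p)$ is supported only at $p=0$, and that on such elements the general formulas for $\delta_X$, $\delta_Y$ collapse: the terms carrying a factor of $p$ vanish, so $\delta_X(\mathbf{G})(x,y,p)=-\frac{\partial}{\partial y}G(x,y)\,\delta_0(p)$, $\delta_Y(\mathbf{G})(x,y,p)=-\frac{\partial}{\partial x}G(x,y)\,\delta_0(p)$, and $\delta_Z(\mathbf{G})=0$. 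Likewise $\Delta(\mathbf{G})(x,y,p)=\bigl(\frac{\partial^2}{\partial x^2}+\frac{\partial^2}{\partial y^2}\bigr)G(x,y)\,\delta_0(p)$, as already computed in this section. The Grassmannian curvature components $\Theta^0_\nabla(X,Y)=\mathbf{f_1}$ and $\Theta^0_\nabla(Y,Z)=\mathbf{f_2}$ are themselves multiplication-type, so the same collapsed formulas apply to them; in particular $\delta_Y(\Theta^0_\nabla(Y,Z))(x,y,p)=-\frac{\partial}{\partial x}f_2(x)\,\delta_0(p)$.

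With these identities in hand, I would simply rewrite the two scalar PDEs of the main Theorem as operator equations. Equation (\ref{main1}), $\frac{\partial}{\partial x}G_1-\frac{\partial}{\partial y}G_2 = c\,G_3-\widetilde{f_1}$ with $\widetilde{f_1}=f_1-a_0$, becomes, upon multiplying through by $\delta_0(p)$ and using $\delta_Y(\mathbf{G_X})=-\frac{\partial}{\partial x}G_1\,\delta_0(p)$ and $\delta_X(\mathbf{G_Y})=-\frac{\partial}{\partial y}G_2\,\delta_0(p)$, precisely
\[
\delta_X(\mathbf{G_Y})-\delta_Y(\mathbf{G_X}) = c\,\mathbf{G_Z}-\Theta^0_\nabla(X,Y)+a_0,
\]
where the constant $a_0=\int_{\mathbb T}f_1(x)\,dx$ accounts for the $-\widetilde{f_1}=-f_1+a_0$ on the right (note $\Theta^0_\nabla(X,Y)=\mathbf{f_1}$ contributes the $-f_1$ term). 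Equation (\ref{main2}), $\bigl(\frac{\partial^2}{\partial x^2}+\frac{\partial^2}{\partial y^2}\bigr)G_3 = \frac{\partial}{\partial x}f_2 + c\,a_0$, becomes $\Delta(\mathbf{G_Z}) = \frac{\partial}{\partial x}f_2(x)\,\delta_0(p)+c\,a_0 = -\delta_Y(\Theta^0_\nabla(Y,Z))+c\,a_0$. Since the main Theorem says $\nabla=\nabla^0+\mathbf{G}$ is a critical point iff (\ref{main1}) and (\ref{main2}) hold, and these are equivalent to the two displayed operator equations, the corollary follows. Finally I would note that the Morita equivalence of Section 2 lets us read all of these $E^{c,\hslash}_{\mu\nu}$-valued identities inside $D^{c,\hslash}_{\frac{1}{4\mu},\frac{\nu}{2\mu}}$, so that $\Delta$ genuinely is (the $p=0$ restriction of) the Laplacian on a quantum Heisenberg manifold, which is the conceptual point of the section.

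The only genuinely delicate point — and the place I would be most careful — is bookkeeping of the constant terms and signs: one must check that the real constant $c_1$ appearing in (\ref{G3-1})/(\ref{Elliptic}) is exactly $c\cdot a_0/(c\cdot i)$ up to the $i$'s in play, i.e. that the solvability condition for the elliptic equation (vanishing of the zero Fourier mode of the right-hand side) forces $c_1$ to be determined by the mean $a_0=\int_{\mathbb T}f_1$, and that $\widetilde{f_1}=f_1-a_0$ is precisely the mean-zero part of $f_1$ so that (\ref{main1}) is itself solvable for $G_1,G_2$. This is the same compatibility/gauge-fixing argument already sketched in Section 5; here it only needs to be matched to the notation $a_0=\int_{\mathbb T}f_1(x)\,dx$. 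Everything else is a direct substitution, so the proof is short: it is essentially the sentence ``apply the collapsed formulas for $\delta_X,\delta_Y,\delta_Z,\Delta$ on multiplication-type elements to the equations of the main Theorem, and invoke Section 2 to interpret the result in $D^{c,\hslash}_{\frac{1}{4\mu},\frac{\nu}{2\mu}}$.''
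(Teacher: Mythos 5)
Your proposal is correct and follows essentially the same route as the paper, which derives the corollary by restricting the derivations and the Laplacian to elements supported at $p=0$ (where they collapse to $-\partial/\partial y$, $-\partial/\partial x$, $0$, and $\partial^2/\partial x^2+\partial^2/\partial y^2$), rewriting the two PDEs of the main theorem as operator identities, and invoking the Section~2 isomorphism $E^{c,\hslash}_{\mu\nu}\cong D^{c,\hslash}_{\frac{1}{4\mu},\frac{\nu}{2\mu}}$ to interpret them on a quantum Heisenberg manifold. Your added remark about matching the constants $c_1$ and $a_0$ (and the purely imaginary mean of the skew-symmetric $f_1$) is a reasonable point of care that the paper leaves implicit.
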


\end{document}